\documentclass{amsart}
\usepackage{amssymb,euscript,amsmath, mathrsfs}
\usepackage{bm}
\usepackage{rotating}
\usepackage[table]{xcolor}

\makeindex

\newcounter{ENUM}

\newcommand{\margh}[1]{}

\input xy
\xyoption{all}
\CompileMatrices

\def\cM{{\mathcal M}}

\def\cV{{\mathcal V}}

\def\sL{{\mathscr L}}
\def\sM{{\mathscr M}}
\def\sO{{\mathscr O}}

\def\Pic{\operatorname{Pic}}

\def\id{\operatorname{id}}

\def\ord{\operatorname{ord}}

\newtheorem{thm}{Theorem}[section]
\newtheorem{prop}[thm]{Proposition}
\newtheorem{lem}[thm]{Lemma}
\newtheorem{cor}[thm]{Corollary}

\theoremstyle{definition}
\newtheorem{defn}[thm]{Definition}
\newtheorem{ques}[thm]{Question}

\newtheorem{ex}[thm]{Example}
\newtheorem{sit}[thm]{Situation}
\theoremstyle{remark}

\newtheorem{rem}[thm]{Remark}

\numberwithin{equation}{section}

\begin{document}
\title{Connectedness of Brill-Noether loci via degenerations}
\author{Brian Osserman}
\begin{abstract} We show that limit linear series spaces for chains of
curves are reduced. Using new advances in the foundations of limit linear
series, we then use degenerations to study the question of connectedness for 
spaces of linear series with imposed ramification at up to two points.
We find that in general, these spaces may not be connected even when they
have positive dimension, but we prove a criterion for connectedness which 
generalizes the theorem previously proved by Fulton and Lazarsfeld in the 
case without imposed ramification.
\end{abstract}

\thanks{The author is partially supported by a grant from the Simons 
Foundation \#279151.}

\maketitle

\section{Introduction}

The classical Brill-Noether theorem states that if we are given 
$g,r,d \geq 0$, a general curve $X$ of genus $g$ carries a linear series
$(\sL,V)$ of rank $r$ and degree $d$ if and only if the quantity
$$\rho:=g-(r+1)(r+g-d)$$
is nonnegative \cite{g-h1}. Moreover, in this case 
the moduli space $G^r_d(X)$ of such linear series has pure dimension
$\rho$. This was generalized by Eisenbud and Harris
to allow for imposed ramification: if we have marked points 
$P_1,\dots,P_n \in X$, and sequences $0<a^i_0<\dots<a^i_r \leq d$ for
$i=1,\dots,n$, we can consider the moduli space 
$G^r_d(X,(P_1,a^1_{\bullet}),\dots,(P_n,a^n_{\bullet})) \subseteq
G^r_d(X)$ parametrizing linear series with vanishing sequence at least
$a^i_{\bullet}$ at each of the $P_i$. Then Eisenbud and Harris used their
theory of limit linear series to show
that in characteristic $0$, if $(X,P_1,\dots,P_n)$ is a general $n$-marked 
curve of genus $g$, the dimension of
$G^r_d(X,(P_1,a^1_{\bullet}),\dots,(P_n,a^n_{\bullet}))$ -- if it is nonempty
-- is given by the generalized formula
$$\rho:=g-(r+1)(r+g-d)-\sum_{i=1}^n \sum_{j=0}^r (a^i_j-j).$$
The condition for nonemptiness is still numerical, but becomes more 
complicated in this context. This theorem fails in positive characteristic
for $n \geq 3$, but is still true if $n \leq 2$; in this case, we also
have a simple criterion for nonemptiness. See for instance
\cite{os18} for a proof of the following.

\begin{thm}\label{thm:bg} Given $(g,r,d)$ nonnegative integers,
and sequences $0 \leq a_0<a_1 <\dots<a_r \leq d$,
$0 \leq b_0<b_1 <\dots<b_r \leq d$, 
set
\begin{equation}\label{eq:hat-rho}
\widehat{\rho}:=g-\sum_{j:a_j+b_{r-j} > d-g} a_j+b_{r-j}-(d-g).
\end{equation}
Then, if $X$ is a general (smooth, projective)
curve of genus $g$ and $P,Q \in X$ are general points, the moduli
space $G^r_d(X,(P,a_{\bullet}),(Q,b_{\bullet}))$
is nonempty if and only if $\widehat{\rho} \geq 0$,
and in this case, it has pure dimension $\rho$.
\end{thm}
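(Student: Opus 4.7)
The plan is to prove the theorem by degeneration to a chain $X_0 = E_1 \cup_{q_1} E_2 \cup_{q_2} \cdots \cup_{q_{g-1}} E_g$ of $g$ general elliptic curves, with $P \in E_1$ and $Q \in E_g$ general smooth points and the nodes $q_i$ chosen generically. Both assertions of the theorem will follow from the analogous statements for the space of (refined) limit linear series on $X_0$ with ramification at least $a_\bullet$ at $P$ and at least $b_\bullet$ at $Q$, together with the smoothing theorem for limit linear series. The reducedness of limit linear series spaces on chains, announced in the abstract, is what makes smoothing work component-by-component without loss of dimension.

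A refined limit $\mathfrak g^r_d$ on $X_0$ amounts to a collection of linear series $(\sL^i, V^i)$ of rank $r$ and degree $d$ on each $E_i$, with vanishing sequences $c^i_\bullet$ at $q_i$ on $E_i$ matching the reversed sequence $d - c^i_{r-j}$ at $q_i$ viewed on $E_{i+1}$, and with $c^0_\bullet \geq a_\bullet$ at $P$ and $c^g_\bullet \geq b_\bullet$ at $Q$. On a general elliptic curve, a classical Brill--Noether analysis determines exactly which pairs of ramification sequences at two general points are realized by some $\mathfrak g^r_d$: up to equality, the two sequences differ by a single elementary ``move'' between adjacent ramification profiles, each such move contributing exactly one unit of moduli. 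The existence problem on $X_0$ therefore reduces to the existence of a chain
\[
a_\bullet = c^0_\bullet,\; c^1_\bullet,\; \ldots,\; c^g_\bullet = b_\bullet
\]
of ramification sequences in which each consecutive pair is compatible across an elliptic link.

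The combinatorial heart of the argument is to show that the minimum number of unit moves needed to transform $a_\bullet$ into $b_\bullet$ equals the quantity $\sum_{j:\, a_j + b_{r-j} > d-g}(a_j + b_{r-j} - (d-g))$ appearing in \eqref{eq:hat-rho}, so that the inequality $\widehat\rho \geq 0$ is equivalent to having enough elliptic links available to bridge $a_\bullet$ and $b_\bullet$. In parallel, the total dimensional contribution --- summing one per move and counting the freedom to distribute the moves among the $g$ links --- must come out to exactly $\rho$. I expect the main obstacle to be making both the lower and upper bounds in this count tight simultaneously, which requires a careful identification of all transitions that can occur on a single elliptic link and a ruling-out of parasitic components of the limit linear series space. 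This last point is where the reducedness theorem becomes decisive: it forces every component of the limit space to smooth out to a component of $G^r_d(X,(P,a_\bullet),(Q,b_\bullet))$ of the same dimension, and combined with the combinatorial count yields both nonemptiness when $\widehat\rho \geq 0$ and the pure dimension statement $\dim = \rho$.
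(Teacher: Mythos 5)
Your overall strategy---degenerate to a chain of $g$ general elliptic curves, classify linear series on a single elliptic component with ramification at two general points, and convert nonemptiness and dimension into bookkeeping of vanishing sequences at the nodes---is exactly the approach of the reference the paper cites for Theorem \ref{thm:bg}, namely \cite{os18}; the paper itself does not reprove this theorem, but its Proposition \ref{prop:base-cases} and Corollaries \ref{cor:lls-reduced-1} and \ref{cor:lls-reduced-2} run the same degeneration (inductively, peeling off one genus-$1$ component at a time rather than passing to the full chain at once, which is only a cosmetic difference). One small caveat: reducedness is not actually needed for this statement. The classical Eisenbud--Harris dimension argument (once the limit linear series space has dimension exactly $\rho$, no component of the relative space can be supported over the closed point) already yields smoothing and the upper bound on the generic fiber dimension; reducedness is what the paper adds in order to get flat, well-structured families for the connectedness application, so leaning on it here is harmless but not ``decisive.''

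The substantive issue is your description of the elliptic building block, which, taken literally, would derail the combinatorial count. On a general $2$-pointed elliptic curve, $G^r_d(E,(P,\alpha_\bullet),(Q,\beta_\bullet))$ is nonempty if and only if $\alpha_j+\beta_{r-j}\le d$ for all $j$ with equality for at most one $j$; writing $\gamma_j=d-\beta_{r-j}$ for the complementary sequence, this says $\gamma_\bullet\ge\alpha_\bullet$ with strict inequality in all but at most one position. Thus each elliptic link forces at least $r$ of the $r+1$ vanishing orders to strictly increase---it is not ``a single elementary move between adjacent ramification profiles.'' Consequently the relevant invariant is not a ``minimum number of unit moves'' (the total number of unit increments from $a_\bullet$ to the complement of $b_\bullet$ is the fixed quantity $(r+1)d-\sum_j a_j-\sum_j b_j=\rho+gr$, not something to be minimized), but rather the number of links at which a given coordinate is permitted to stall: coordinate $j$ can increase by at most $d-a_j-b_{r-j}$ in total over the $g$ links, hence must stall at least $g-(d-a_j-b_{r-j})$ times, and since each link tolerates at most one stalled coordinate, the total demand $\sum_{j:\,a_j+b_{r-j}>d-g}\bigl(a_j+b_{r-j}-(d-g)\bigr)$ must be at most $g$---which is precisely $\widehat{\rho}\ge 0$. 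With the count set up this way, together with the matching sufficiency construction of intermediate sequences and the per-link dimension contribution $\sum_j(\gamma_j-\alpha_j)-r$ summing to $\rho$, your plan goes through; as written, the combinatorial heart you flag as the main obstacle is aimed at the wrong quantity.
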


In a complementary direction, Fulton and Lazarsfeld used an analysis
of degeneracy loci on $\Pic^d(X)$ to show that the spaces $G^r_d(X)$ 
are always connected when $\rho \geq 1$ \cite{f-l1}.
Our main result is the following theorem combining these two strands:

\begin{thm}\label{thm:main} In the situation of Theorem \ref{thm:bg},
the space $G^r_d(X,(P,a_{\bullet}),(Q,b_{\bullet}))$
is also reduced whenever it is nonempty.

Furthermore, if $(X,P,Q)$ is any $2$-marked curve of genus $g$ such that
the space $G^r_d(X,(P,a_{\bullet}),(Q,b_{\bullet}))$ is pure of dimension 
$\rho$,
and if $\widehat{\rho} \geq 1$, then $G^r_d(X,(P,a_{\bullet}),(Q,b_{\bullet}))$ 
is connected.
\end{thm}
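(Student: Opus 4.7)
The plan is to specialize $(X,P,Q)$ to a chain $X_0$ of elliptic curves and translate both questions into statements about the limit linear series space on $X_0$, exploiting the foundational results assembled earlier in the paper.

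First I would embed $(X,P,Q)$ in a one-parameter family $\pi \colon \cX \to B$ of $2$-pointed curves over a DVR $B$, with generic fiber $(X,P,Q)$ and closed fiber a chain $X_0$ of elliptic components carrying $P, Q$ on the two end components. Using the paper's framework for limit linear series, I would then assemble a proper relative moduli scheme $\cG \to B$ whose generic fiber is $G^r_d(X,(P,a_{\bullet}),(Q,b_{\bullet}))$ and whose closed fiber is the limit linear series space attached to $X_0$. Both fibers have pure dimension $\rho$---the generic one by Theorem~\ref{thm:bg} in the reducedness assertion and by hypothesis in the connectedness assertion, the closed one by the dimension theory of limit linear series---so every irreducible component of $\cG$ dominates $B$, and $\cG \to B$ is flat.

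The reducedness claim now follows directly from the paper's reducedness result for limit linear series spaces on chains: a flat family over a DVR whose closed fiber is reduced has reduced total space (any nilpotent element would be infinitely divisible by the uniformizer, contradicting Krull's intersection theorem), and the generic fiber is an open subscheme of that total space.

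For connectedness, I would apply upper semicontinuity of $s \mapsto h^0(\cO_{\cG_s})$ on the proper flat family $\cG \to B$. Reducedness of the closed fiber identifies $h^0(\cO_{\cG_0})$ with the number of its connected components, so $G^r_d(X,(P,a_{\bullet}),(Q,b_{\bullet}))$ will be connected as soon as the limit linear series space on $X_0$ is connected whenever $\widehat{\rho} \geq 1$. This chain connectedness is where the main work lies. My plan is to stratify the limit linear series space according to the ramification profiles at the nodes of $X_0$, describe each stratum as a product over the elliptic components of a space of linear series on the individual $E_i$ with ramification at two points, and then show that the closure relations among these strata are rich enough to link any two. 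An induction on the number of elliptic components, attaching one tail at a time, should reduce the problem to a combinatorial statement governed by $\widehat{\rho}$; the hypothesis $\widehat{\rho} \geq 1$ is precisely what guarantees enough positive-dimensional strata for the linking to go through, and controlling this combinatorics is the principal obstacle.
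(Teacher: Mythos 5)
Your overall architecture matches the paper's: degenerate to a chain, use the flat proper relative limit linear series space, deduce reducedness of the generic fiber from reducedness of the special fiber, and transfer connectedness via constancy of $h^0(\cO)$ in proper flat families with reduced fibers. However, the step you yourself identify as "the principal obstacle" -- connectedness of the limit linear series space on the chain -- is where the real content lies, and the mechanism you propose for it does not work as stated. The strata of the chain's space are products $G^r_d(Z_1,(P_1,a_{\bullet}),(Q_1,b^1_{\bullet})) \times G^r_d(Z_2,(P_2,a^2_{\bullet}),(Q_2,b_{\bullet})) \times \cdots$ indexed by complementary vanishing sequences at the nodes, and the factors can have $\widehat{\rho}_i = 0$; such factors are genuinely disconnected even when positive-dimensional (Example \ref{ex:disconnected} exhibits a reduced, one-dimensional, disconnected space with $\widehat{\rho}=0$). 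So an induction whose hypothesis is only "connected when $\widehat{\rho}\geq 1$" cannot link the strata: it is not enough to have "enough positive-dimensional strata." The paper's key extra idea is a strengthened induction statement (the second part of Theorem \ref{thm:connected}): when $\widehat{\rho}=0$, every connected component of $G^r_d(X,(P,a_{\bullet}),(Q,b_{\bullet}))$ meets the subscheme obtained by incrementing a single vanishing order while preserving $\widehat{\rho}=0$. This, together with an explicit pair of combinatorial "moves" on the node vanishing sequences (using the subadditivity Lemma \ref{lem:subadditive}), is what shows the adjacency graph of strata is connected and that at least one stratum is itself connected. Without some such strengthening your linking argument fails exactly in the $\widehat{\rho}_i=0$ strata.

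A secondary gap: you cannot in general realize a \emph{given} $2$-marked curve $(X,P,Q)$ as the generic fiber of a DVR family whose special fiber is a chain of elliptic curves, so the degeneration only proves the statements for the generic fiber of \emph{some} smoothing family. To reach a general curve over the (algebraically closed) base field you need openness of geometric reducedness and local constancy of the number of geometric connected components in a versal family (the paper invokes Theorem 12.2.1 and Proposition 15.5.7 of EGA IV$_3$); note also that your Krull-intersection argument gives reducedness but not geometric reducedness of the generic fiber, which is what the openness argument requires. Finally, the connectedness assertion of Theorem \ref{thm:main} is for \emph{any} curve on which the space has pure dimension $\rho$, not just a general one, so a further specialization step (universal openness over the expected-dimension locus plus Corollary 15.5.4 of EGA IV$_3$, or equivalently Lemma \ref{lem:meets-open}) is needed; your proposal does not address this.
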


Note that in the situation of the theorems, $\rho$ is given by the same
formula as \eqref{eq:hat-rho}, but with the sum ranging over all $j$.
Thus, $\widehat{\rho} \leq \rho$ always. We will have
$\widehat{\rho}=\rho$ whenever $d \leq r+g$, which underlines that for questions
involving imposed ramification, the nonspecial case often displays 
interesting and novel behavior. On the other hand, when no ramification
is imposed, we will have $\widehat{\rho} =g$ whenever $d>r+g$, so we recover
the Fulton-Lazarsfeld result on connectedness whenever $\rho \geq 1$.
Example \ref{ex:disconnected} demonstrates that in the presence of imposed 
ramification, the hypothesis that $\widehat{\rho} \geq 1$ is indeed necessary 
for connectedness. Together with the criterion for nonemptiness stated in
Theorem \ref{thm:bg}, a pattern emerges that in generalizing from
classical statements without imposed ramification, the quantity 
$\widehat{\rho}$ seems to arise naturally as an alternative generalization
of $\rho$.

Aside from the generalization to imposed ramification, the novelty of
our approach is that until now, the connectedness theorem was the only
part of standard Brill-Noether theory which did not have a proof using
degeneration techniques. The main reason for this was not that the 
analysis of the relevant limit linear series spaces was especially 
difficult, but that to study topological properties such as connectedness
via specializations, it is crucial to have flat, proper families with
well-understood scheme structures. Until the machinery introduced in 
\cite{os20} and \cite{o-m1}, such moduli spaces had never been constructed 
in the context of limit linear series. It is also a pleasant facet of our 
techniques that they are intrinsically characteristic independent.
Finally, we mention that even when $\widehat{\rho}=0$, our techniques reduce
describing the number of connected components of 
$G^r_d(X,(P,a_{\bullet}),(Q,b_{\bullet}))$ to a purely combinatorial
problem; see Remark \ref{rem:conn-combinatorial}.

Note that our connectedness theorem does not strictly generalize the 
Fulton-Lazarsfeld theorem because it applies to Brill-Noether-general 
curves, whereas their result applies to all curves. While reducedness was
known in the absence of imposed ramification going back to Griffiths
and Harris \cite{g-h1}, and is in any case superceded by the Gieseker-Petri
theorem, it appears to be new in the case of imposed
ramification, except that the case $\rho \leq 1$ is proved by Chan,
Lopez, Pflueger, and Teixidor i Bigas in \cite{c-l-p-t1}. The case of
ramification at three or more points is much more delicate; see Remark
\ref{rem:more-ram}.

In the case that $\rho=0$ and there is no imposed ramification, spaces
of linear series are not connected, but (in characteristic $0$) Eisenbud 
and Harris proved \cite{e-h7} that in suitable families, the relative
space of linear series is still connected. This leads us to ask:

\begin{ques} In cases with $\widehat{\rho}=0$, when the space of linear
series with imposed ramification on individual curves is not connected,
are there families of curves over which the relative space of linear series
is nonetheless connected?
\end{ques}

\subsection*{Acknowledgements} I would like to thank Izzet Coskun and 
Frank Sottile for helpful conversations, and Melody Chan and Nathan 
Pflueger for their comments, especially pointing out an important oversight 
in an earlier version of this paper.

\subsection*{Conventions}

We work throughout over an algebraically closed field of arbitrary
characteristic.

Given $r,d\geq 0$, a \textbf{vanishing sequence} $a_{\bullet}=a_0,\dots,a_r$
is a strictly increasing sequence with $a_0 \geq 0$ and $a_r \leq d$.
Two vanishing sequences $a_{\bullet}$, $b_{\bullet}$ are 
\textbf{complementary} if $a_j+b_{r-j}=d$ for $j=0,\dots,r$.
Given vanishing sequences $a_{\bullet},a'_{\bullet}$, we write
$a_{\bullet} \geq a'_{\bullet}$ if $a_j \geq a'_j$ for $j=0,\dots,r$, and
we write $a_{\bullet}>a'_{\bullet}$ if $a_{\bullet} \geq a'_{\bullet}$ and
furthermore $a_j>a'_j$ for some $j$.

\section{Richardson varieties and the base cases}

We recall that a \textbf{Richardson variety} is an intersection of two
Schubert varieties associated to transverse flags. These are well studied,
starting with Richardson \cite{ri2}: they are irreducible and
reduced of the expected codimension; it follows from
the Cohen-Macaulayness of Schubert varieties that
they are also Cohen-Macaulay. Richardson varieties arise naturally in 
studying the fibers of the map
$$G^r_d(X,(P,a_{\bullet}),(Q,b_{\bullet})) \to \Pic^d(X);$$ 
in fact, in our base cases where $X$ has genus $0$ or $1$, the general
fibers will always be Richardson varieties. This is the basis for the proof 
of the following.

\begin{prop}\label{prop:base-cases} Given $r,d$ nonnegative integers,
and sequences $0 \leq a_0<a_1 <\dots<a_r \leq d$,
$0 \leq b_0<b_1 <\dots<b_r \leq d$, let $X$ be a smooth, projective
curve of genus $g$ and $P,Q \in X$, and suppose either that $g=0$ and
$P \neq Q$, or that $g=1$ and $P-Q$ is not $m$-torsion for any $m \leq d$.
Then if $\widehat{\rho}\geq 0$,
the moduli space $G^r_d(X,(P,a_{\bullet}),(Q,b_{\bullet}))$ is 
reduced and irreducible, and Cohen-Macaulay of dimension $\rho$.
\end{prop}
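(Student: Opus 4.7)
The strategy is to analyze the forgetful map
$$\pi\colon G := G^r_d(X,(P,a_\bullet),(Q,b_\bullet)) \to \Pic^d(X),\qquad (\cL,V)\mapsto \cL.$$
The fiber of $\pi$ over a line bundle $\cL$ of degree $d$ is the subscheme of the Grassmannian $G(r+1, H^0(\cL))$ cut out by the Schubert conditions $\dim(V\cap F^P_{a_j})\geq r+1-j$ and $\dim(V\cap F^Q_{b_j})\geq r+1-j$, where $F^P_k := H^0(\cL(-kP))$ and $F^Q_l := H^0(\cL(-lQ))$. By the classical theory of Richardson varieties, whenever the two flags $F^P_\bullet, F^Q_\bullet$ are transverse the fiber is a Richardson variety: irreducible, reduced, and Cohen-Macaulay, with expected codimension $\sum_j(a_j-j)+\sum_j(b_j-j)$, and nonempty precisely when $a_j+b_{r-j}\leq h^0(\cL)-1$ for all $j$.

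Transversality of $F^P_\bullet$ and $F^Q_\bullet$ is the assertion $\dim(F^P_k\cap F^Q_l)=\max(h^0(\cL)-k-l,0)$, i.e.\ that $h^0(\cL(-kP-lQ))$ attains its generic Riemann--Roch value. For $g=0$ this holds for every $\cL$; for $g=1$ it can fail only when $k+l=d$ and $\cL\cong \cO(kP+lQ)$. The hypothesis that $P-Q$ is not $m$-torsion for any $m\leq d$ guarantees that the exceptional set $\Sigma := \{\cO(kP+lQ) : k + l = d, \, k, l \geq 0\}$ consists of exactly $d+1$ distinct points of $\Pic^d(X)$, so transversality holds on the dense open $U := \Pic^d(X)\setminus \Sigma$.

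In the $g=0$ case, $\Pic^d(X)$ is a single point, and $G$ is itself a Richardson variety in $G(r+1,d+1)$; all stated properties follow at once, the expected codimension gives $\dim G=\rho$, and the Schubert nonemptiness criterion $a_j+b_{r-j}\leq d$ for all $j$ is precisely $\widehat\rho\geq 0$. In the $g=1$ case, the argument splits according to whether $a_j+b_{r-j}\leq d-1$ for all $j$ (so $\widehat\rho=1$) or some $a_{j_0}+b_{r-j_0}=d$ (so $\widehat\rho=0$). In the first subcase, the generic fiber of $\pi$ is a Richardson variety of dimension $\rho-1$ and $\pi^{-1}(U)$ is a Richardson fibration of dimension $\rho$ satisfying all the required properties. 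In the second subcase, the generic fiber of $\pi$ is empty and $G$ is set-theoretically concentrated on the single fiber over $\cO(a_{j_0}P+b_{r-j_0}Q)$; there, the one-dimensional defect $\dim(F^P_{a_{j_0}}\cap F^Q_{b_{r-j_0}}) = 1$ forces $V$ to contain the canonical section of $\cO(a_{j_0}P+b_{r-j_0}Q)$, and a dimension-reduction argument identifies this fiber with a Richardson variety of dimension $\rho$.

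The main obstacle is promoting these fiberwise identifications to global statements about the scheme structure of $G$, especially in the $g=1$, $\widehat\rho=1$ subcase, where one must extend the irreducibility, reducedness, and Cohen-Macaulayness of $\pi^{-1}(U)$ across the finitely many bad fibers over $\Sigma$. I would handle this by constructing $G$ as the scheme-theoretic intersection of two Schubert-type degeneracy loci inside the relative Grassmannian $G_{\Pic^d(X)}(r+1, E)$, with $E$ the pushforward of an appropriately normalized Poincar\'e bundle, and then invoking Cohen-Macaulayness of degeneracy loci of expected codimension, combined with density of $\pi^{-1}(U)$, to rule out embedded components or extra components over $\Sigma$.
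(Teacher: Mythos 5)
Your overall strategy---fibering over $\Pic^d(X)$, identifying the general fiber with a Richardson variety, and then realizing $G:=G^r_d(X,(P,a_{\bullet}),(Q,b_{\bullet}))$ as an expected-codimension intersection of two relative Schubert loci in the relative Grassmannian to get Cohen--Macaulayness and propagate reducedness from the generic fiber---is the same as the paper's. But there is a genuine gap where you invoke ``density of $\pi^{-1}(U)$'' to extend across the bad locus $\Sigma$ in the $g=1$, $\widehat{\rho}=1$ subcase. That density is not automatic: it is equivalent to showing that every fiber over $\Sigma$ has dimension at most $\rho-1$. Each component of $G$ has dimension at least $\rho$ by the determinantal lower bound, so a fiber of dimension $\geq\rho$ over a point of $\Sigma$ could a priori support an entire component, which would destroy the expected-codimension hypothesis you need for Cohen--Macaulayness as well as the irreducibility claim. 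Over $\sL=\sO(aP+(d-a)Q)$ the two flags are not transverse, so the fiber is not a Richardson variety; if $a$ does not occur in $a_{\bullet}$ or $d-a$ does not occur in $b_{\bullet}$ one can pass to transverse partial flags, but in the remaining case $a=a_{j_1}$, $d-a=b_{r-j_2}$ with $j_1\neq j_2$ the fiber can be reducible and one must still prove the bound $\dim\leq\rho-1$. The paper handles this by an explicit parameter count over adapted bases of sections (stratifying by the refined vanishing data and a permutation $\sigma$); this is precisely the case that was overlooked in \cite{os18}, and your proposal never engages with it.

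A second, smaller gap is in the $\widehat{\rho}=0$ subcase: you show $G$ is set-theoretically concentrated over the single point $\sO(a_{j_0}P+b_{r-j_0}Q)$ and identify the scheme-theoretic fiber there with a Richardson variety, but that alone does not give reducedness of $G$, since nilpotents could survive in the direction of $\Pic^d(X)$. One needs the stronger statement that $G$ is scheme-theoretically supported over that point; the paper proves this by a $T$-valued-point argument extracting a universal rank-one kernel subbundle and showing it trivializes $\sL(-(a_{j_0}P+(d-a_{j_0})Q)\times T)$. Only after that does $G$ coincide with its fiber, so that the Richardson identification (via replacing $a_{j_0}$ by $a_{j_0}-1$ and comparing a closed subscheme with a reduced scheme having the same support) finishes the proof.
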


It is already shown for instance in Lemma 2.1 of \cite{os18} that under
the hypotheses of Proposition \ref{prop:base-cases}, the space
$G^r_d(X,(P,a_{\bullet}),(Q,b_{\bullet}))$ is nonempty if and only if
$\widehat{\rho}$ is satisfied, and in this case it has pure dimension
$\rho$. As indicated above, the basic idea is to describe most of the fibers 
of the morphism
$G^r_d(X,(P,a_{\bullet}),(Q,b_{\bullet})) \to \Pic^d(X)$ as Richardson
varieties, and a more careful analysis of this argument will yield a proof
of Proposition \ref{prop:base-cases}. In fact, there was an oversight in
one case of the argument in \cite{os18}, which we will address 
simultaneously.

\begin{proof}
First, if $X$ has genus $0$, the space
$G^r_d(X,(P,a_{\bullet}),(Q,b_{\bullet}))$ is identified
with an intersection of two Schubert varieties in the Grassmannian
$G(r+1,\Gamma(X,\sO(d)) \cong G(r+1,d+1)$. Moreover, one checks easily that 
the flags defining
these Schubert varieties meet transversely, so in this case
$G^r_d(X,(P,a_{\bullet}),(Q,b_{\bullet}))$ is itself a Richardson variety,
and the desired statement follows immediately. 

On the other hand, if $X$ has genus $1$, we elaborate on the argument
given in Lemma 2.1 of \cite{os18}. As mentioned above,
we have nonemptiness in this case if and only if
$a_j + b_{r-j} \leq d$, with equality occuring for at most one $j$.
If equality does occur for some $j_0$, then 
$G^r_d(X,(P,a_{\bullet}),(Q,b_{\bullet}))$ is supported over
the point of $\Pic^d(X)$ corresponding to 
$\sO_{X}(a_{j_0}P+b_{r-j_0} Q)
=\sO_{X}(a_{j_0}P+(d-a_{j_0} Q)$, whereas if the inequality is
strict for all $j$, it maps surjectively onto $\Pic^d(X)$. We refer to
these as the first and second cases, respectively.

Our first task is to verify that in the first case, 
$G^r_d(X,(P,a_{\bullet}),(Q,b_{\bullet}))$ is supported 
scheme-theoretically over the relevant point of $\Pic^d(X)$. Accordingly,
let $T$ be a $k$-scheme, and $(\sL,\cV)$ a $T$-valued point of
$G^r_d(X,(P,a_{\bullet}),(Q,b_{\bullet}))$, so that in particular
$\sL$ is a line bundle on $X \times_k T$ and $\cV$ is a rank-$(r+1)$ 
vector bundle on $T$ with a map to $p_{2*} \sL$. For brevity, write 
$a=a_{j_0}$. Then by definition of imposed ramification, we have that
the rank of the natural maps 
$\cV \to p_{2*} \left(\sL|_{aP \times T}\right)$
and
$\cV \to p_{2*} \left(\sL|_{(d-a)Q \times T}\right)$
are (scheme-theoretically) less than or equal to $j_0$ and $r-j_0$
respectively on $T$. We then see by expanding out minors in local matrix
expressions that the natural map
\begin{equation}\label{eq:restrict}
\cV \to p_{2*} \left(\sL|_{aP \times T}\right) \oplus
p_{2*} \left(\sL|_{(d-a)Q \times T}\right)
= p_{2*} \left(\sL|_{(aP+(d-a)Q) \times T}\right)
\end{equation}
has rank less than or equal to $r$ on $T$.
On the other hand, at any point $t \in T$, the map
$\cV|_t \to \Gamma(X,\sL|_{(aP + (d-a)Q) \times t})$
can have only a $1$-dimensional kernel, so we conclude that
\eqref{eq:restrict} has rank exactly $r$ on $T$, and therefore contains
a rank-$1$ subbundle $\sM$ which gives the kernel universally.
Pulling back to $X \times_k T$, we have maps
$$p_{2*} \sM \to \cV \otimes \sO_{X \times_k T} \to \sL \to 
\sL|_{(aP+(d-a)Q) \times T}$$
such that the composed map to $\sL$ does not vanish identically in any
fiber, but the composed map to $\sL|_{(aP+(d-a)Q) \times T}$
is zero. The latter implies that the map factors through 
a map $p_{2*} \sM \to \sL(-((aP+(d-a)Q)\times T)$ which does not vanish
identically in any fiber. Since both line bundles have relative degree
$0$, we conclude that this map is an isomorphism, and hence that our
$T$-valued point is supported (scheme-theoretically) in the desired
fiber of $\Pic^d(X)$.

Next, in both the first and second cases, if $\sL \in \Pic^d(X)$ is in
the image of $G^r_d(X,(P,a_{\bullet}),(Q,b_{\bullet}))$, then the fiber
over $\sL$ is again
described (scheme-theoretically) as an intersection of two Schubert 
varieties inside a Grassmannian
$G(r+1,\Gamma(X,\sL)) \cong G(r+1,d)$.
We recall from \cite{os18} that when $\sL$ is not of the form 
$\sO(aP+(d-a)Q)$ for some $a$ with $0 \leq a \leq d$, then the intersection 
is again a Richardson variety, of dimension $\rho-1$. If
$\sL=\sO(dP)$, then we still obtain a Richardson variety which will have 
dimension $\rho-1$ if $a_r<d$ and $\rho$ if $a_r=d$ (due to
the difference in this case between the vanishing sequence indexing and
the usual Schubert variety indexing). The same holds if $\sL=\sO(dQ)$,
with $b_r$ in place of $a_r$. The final situation is that 
$\sL=\sO(aP+(d-a)Q)$ for some $a$ with $0 < a < d$. Here, our flags do
not intersect transversely, at precisely the place corresponding to imposing
order-$a$ vanishing at $P$ and order-$(d-a)$ vanishing at $Q$. If either
$a$ does not appear in $a_{\bullet}$ or $d-a$ does not occur in $b_{\bullet}$,
the partial flags used to define the Schubert varieties are still transverse,
so the intersection is still a Richardson variety, of
dimension $\rho-1$. Next, we consider the possibility that $a=a_{j_0}$ and
$d-a=b_{r-j_0}$ for some $j_0$ (i.e., exactly the situation occurring in
our first case). As explained in \cite{os18}, in this case we do not
change the intersection of Schubert varieties if we replace $a_{j_0}$ by
$a-1$, and we then see that we again get a Richardson variety, but this
time of dimension $\rho$. \textit{A priori} this argument was 
set-theoretic, but since replacing $a$ by $a-1$ in $a_{\bullet}$ only
increases the size of the Schubert variety, we have that 
$G^r_d(X,(P,a_{\bullet}),(Q,b_{\bullet}))$ is a closed subscheme of the
Richardson variety, with the same support, and since the latter is reduced
they must agree as schemes. 

This completes the argument for the first case, and for the second case,
the last possibility we have to consider above is that we have 
$a=a_{j_1}$ and $d-a=b_{r-j_2}$ for some $j_1 \neq j_2$. This case was
overlooked in \cite{os18}, and in fact in this case the fibers may
be reducible, but in any case, the dimension of the fiber over $\Pic^d(X)$ 
is at most $\rho-1$ in this situation. The argument can be expressed in
terms of rather standard descriptions of intersections of pairs of
Schubert varieties associated to non-transverse flags (as described e.g.
in \S\S 2.2 and 2.5 and Lemma 2.6 of \cite{va7}), and in this context
turns out to be a special case of the last paragraph of the proof of 
Lemma 6.3 of \cite{os23}. However, because the bound can also be obtained
by a brief direct argument, for the sake of remaining self contained we
include a proof. First, any linear series $(\sL,V)$ in
$G^r_d(X,(P,a_{\bullet}),(Q,b_{\bullet}))$ has some
vanishing sequences $a'_{\bullet} \geq a_{\bullet}$ and $b'_{\bullet} \geq
b_{\bullet}$ at $P$ and $Q$, and it is straightforward to see that $V$ must 
admit a 
basis $s_0,\dots,s_r$ such that $\ord_P s_j=a'_j$ and 
$\ord_Q s_j=b'_{\sigma(r-j)}$ for some permutation $\sigma$ of $0,\dots,r$.
The data of $a'_{\bullet},b'_{\bullet}$ and $\sigma$ is equivalent to 
specifying the dimension of incidence of $V$ with every simultaneous
vanishing condition at $P$ and $Q$, so if we fix $\sL=\sO(aP+(d-a)Q)$, we
obtain a decomposition of the fiber of interest into locally closed 
subsets, each of which we want to show has dimension at most $\rho-1$.
But for each subset, we have a surjection from the space of
$(r+1)$-tuples of sections $s_j$ of $\sL$ satisfying the above conditions
on orders at $P$ and $Q$. This space of tuples has dimension
$$\sum_{j=0}^r \dim \Gamma(X,\sL(-a'_j P -b'_{\sigma(r-j)} Q)),$$
and each term in the sum is equal to $d-a'_j-b'_{\sigma(r-j)}$ except in
the case that $a'_j=a$ and $b'_{\sigma(r-j)}=d-a$, in which case we get
$d-a'_j-b'_{\sigma(r-j)}+1$. Thus, the sum is equal to 
$(r+1)d-\sum_j a'_j - \sum_j b'_j + \epsilon$, where $\epsilon=1$ if there is
some $j$ with $a'_j=a$ and $b'_{\sigma(r-j)}=d-a$, and $\epsilon=0$ otherwise.
The hypotheses of our situation entail that if $\epsilon=1$, then 
$\sigma(r-j)\neq r-j$, so in particular $\sigma \neq \id$. Now, the
fibers from the space of $(r+1)$-tuples to 
$G^r_d(X,(P,a_{\bullet}),(Q,b_{\bullet}))$ always have dimension at 
least $r+1$, corresponding to independent scaling of the $s_j$, and we
see that additional changes of basis are possible (and hence the fiber
dimension is strictly bigger) precisely when $\sigma \neq \id$, since in
this case there is necessarily some $j_1<j_2$ with 
$\sigma(r-j_1)<\sigma(r-j_2)$. We thus
compute that the dimension of each given subset of the fiber is bounded by
$$(r+1)d-\sum_j a'_j - \sum_j b'_j - (r+1)\leq
(r+1)d-\sum_j a_j - \sum_j b_j - (r+1)=\rho-1,$$
as desired.

In the second case, we now have that the map
$G^r_d(X,(P,a_{\bullet}),(Q,b_{\bullet})) \to \Pic^d(X)$ is proper
and surjective, with every fiber having dimension at most $\rho-1$, 
and with the general fiber reduced and irreducible. We also know
\textit{a priori} that every component of 
$G^r_d(X,(P,a_{\bullet}),(Q,b_{\bullet}))$ has dimension at least $\rho$,
so we conclude that every component must have dimension exactly $\rho$,
every fiber must have dimension $\rho-1$, and no component can be supported 
in any fiber. It then follows from
the irreducibility of $\Pic^d(X)$ that 
$G^r_d(X,(P,a_{\bullet}),(Q,b_{\bullet}))$ is irreducible. Now,
in the second case we cannot have $a_r=d$ or $b_r=d$, so we see that
$G^r_d(X,(P,a_{\bullet}),(Q,b_{\bullet}))$ is constructed inside 
the relative Grassmannian $G^r_d(X)$ as an intersection of two
relative Schubert varieties, which are each Cohen-Macaulay.
Since we have shown that this intersection has the expected codimension,
it is Cohen-Macaulay. This implies first that 
$G^r_d(X,(P,a_{\bullet}),(Q,b_{\bullet}))$ is flat over $\Pic^d(X)$.
But then we conclude that $G^r_d(X,(P,a_{\bullet}),(Q,b_{\bullet}))$ 
is generically reduced, since it is irreducible
and the general fiber is reduced. Finally, reducedness follows 
as an additional consequence of Cohen-Macaulayness.
\end{proof}

\section{Limit linear series}

We now use the Eisenbud-Harris theory of limit linear series to prove the 
reducedness portion of Theorem \ref{thm:main}. For the sake of convenience,
we also incorporate the well-known dimension results into our statements.
We begin by specifying the curves we will consider, and recalling the basic
definitions.

\begin{sit}\label{sit:chain} Fix $g,d,n$. Let $Z_1,\dots,Z_n$
be smooth projective curves, with (distinct) $P_i, Q_i$ on $Z_i$ for each
$i$, and let $X_0$ be the nodal curve obtained by gluing $Q_i$ to $P_{i+1}$ 
for $i=1,\dots,n-1$.
\end{sit}

\begin{defn}\label{defn:lls} Given $r,d$ a \textbf{limit linear series}
of rank $r$ and degree $d$ on $X_0$ consists of a tuple $(\sL^i,V^i)$ of 
linear series of rank $r$ and degree $d$ on the $Z_i$, satisfying the
following condition: if $a^i_{\bullet},b^i_{\bullet}$ are the vanishing
sequences of $(\sL^i,V^i)$ at $P_i$ and $Q_i$ respectively, then we require
\begin{equation}\label{eq:eh-ineq} 
b^i_j+a^{i+1}_{r-j} \geq d
\end{equation}
for all $i=1,\dots,n-1$ and $j=0,\dots,r$. If \eqref{eq:eh-ineq} is an
equality for all $i,j$, we say that the limit linear series is
\textbf{refined}.

The space of all such limit linear series on $X_0$ is denoted by
$G^r_d(X_0)$. If we have sequences $a_{\bullet}$ and $b_{\bullet}$,
we also have the closed subscheme 
$G^r_d(X_0,(P_1,a_{\bullet}),(Q_n,b_{\bullet})) \subseteq G^r_d(X_0)$
consisting of limit linear series such that, following the above notation,
we have $a^1_{\bullet} \geq a_{\bullet}$ and $b^n_{\bullet} \geq b_{\bullet}$.
\end{defn}

Any set of vanishing sequences at all of the $P_i,Q_i$ 
satisfying \eqref{eq:eh-ineq} can be decreased to obtain a set of
refined vanishing sequences, so by definition we have that
$G^r_d(X_0,(P_1,a_{\bullet}),(Q_n,b_{\bullet}))$ is the
union over all tuples of complementary vanishing sequences 
$b^i_{\bullet},a^{i+1}_{\bullet}$ of 
$$G^r_d(Z_1,(P_1,a_{\bullet}),(Q_i,b^i_{\bullet})) \times
\left(\prod_{i=2}^{n-1} 
G^r_d(Z_i,(P_i,a^i_{\bullet}),(Q_i,b^i_{\bullet}))\right)
\times G^r_d(Z_n,(P_i,a^i_{\bullet}),(Q_i,b_{\bullet})).$$
We define the scheme structure on
$G^r_d(X_0,(P_1,a^1_{\bullet}),(Q_n,b^n_{\bullet}))$ to be the one
induced by the natural scheme structures on the individual spaces
$G^r_d(Z_i,(P_i,a^i_{\bullet}),(Q_i,b^i_{\bullet}))$. 
To settle questions of reducedness and dimension 
it therefore suffices to consider the individual spaces
$G^r_d(Z_i,(P_i,a^i_{\bullet}),(Q_i,b^i_{\bullet}))$. Note also that by
additivity of the Brill-Noether number (Lemma 3.6 of \cite{e-h1}), 
expected dimension of
the latter spaces implies that the limit linear series space has the 
expected dimension as well.
In particular, when every $Z_i$ has genus at most $1$, 
Proposition \ref{prop:base-cases} has the following immediate corollary.

\begin{cor}\label{cor:lls-reduced-1} Suppose that the genus of each
$Z_i$ is at most $1$, and that for all $i$ with $Z_i$ having genus $1$,
we have that
$P_i-Q_i$ is not $m$-torsion for any $m \leq d$.
Given vanishing sequences
$a_{\bullet}$, $b_{\bullet}$, the space 
$G^r_d(X_0,(P_1,a_{\bullet}),(Q_n,b_{\bullet}))$ of limit linear 
series on $X_0$ is reduced of pure dimension $\rho$.
\end{cor}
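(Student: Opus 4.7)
The plan is to exploit the decomposition of $G^r_d(X_0,(P_1,a_\bullet),(Q_n,b_\bullet))$ recorded immediately preceding the statement of the corollary: as a union, indexed by tuples of complementary vanishing sequences $b^i_\bullet, a^{i+1}_\bullet$, of the product schemes
\[ G^r_d(Z_1,(P_1,a_\bullet),(Q_1,b^1_\bullet)) \times \prod_{i=2}^{n-1} G^r_d(Z_i,(P_i,a^i_\bullet),(Q_i,b^i_\bullet)) \times G^r_d(Z_n,(P_n,a^n_\bullet),(Q_n,b_\bullet)), \]
endowed with the natural scheme structure on each product. It will therefore suffice to verify that each product piece is reduced of pure dimension $\rho$; the union of such pieces will then automatically be reduced of the same pure dimension, because an intersection of radical ideals in the ambient scheme is again radical, and a union of closed subschemes all of dimension $\rho$ is itself of pure dimension $\rho$.

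For any fixed tuple of complementary vanishing sequences, the hypotheses on the $Z_i$ place us squarely in the setting of Proposition~\ref{prop:base-cases}. This tells us that each factor $G^r_d(Z_i,(P_i,a^i_\bullet),(Q_i,b^i_\bullet))$ is either empty, in which case the whole product is empty and can be discarded, or reduced and irreducible of dimension equal to its local Brill--Noether number $\rho_i$. Because we work over an algebraically closed field, each nonempty factor is geometrically reduced, so the corresponding product scheme is reduced as well.

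For the dimension I would invoke additivity of the Brill--Noether number (Lemma~3.6 of \cite{e-h1}), which guarantees that for any tuple of complementary vanishing sequences the sum $\sum_i \rho_i$ equals the global $\rho$ attached to $X_0$ with boundary data $a_\bullet, b_\bullet$. Thus every nonempty product piece has dimension exactly $\rho$, and assembling along the union preserves both reducedness and purity of this dimension. There is no substantive obstacle here: the corollary is a near-immediate consequence of Proposition~\ref{prop:base-cases}, the product decomposition, and additivity of $\rho$. The only point requiring care is to observe that additivity rules out the appearance of any lower-dimensional stratum that could spoil purity; this is exactly what the Eisenbud--Harris additivity lemma provides.
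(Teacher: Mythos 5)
Your proposal is correct and follows essentially the same route as the paper, which treats the corollary as an immediate consequence of Proposition \ref{prop:base-cases} applied factor-by-factor to the product decomposition over complementary vanishing sequences, together with Eisenbud--Harris additivity of the Brill--Noether number for the dimension count. Your additional remarks on geometric reducedness of the factors and radicality of the union are exactly the (routine) points the paper leaves implicit.
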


We next use the machinery of \cite{os20} and \cite{o-m1} to generalize
to the case of components of higher genus. We first state the following
easy consequence of \cite{o-m1}.

\begin{thm}\label{thm:family-recall}
Let $B$ be the spectrum of a discrete valuation ring, and $\pi:X \to B$ be
a flat, proper family family of curves of genus $g$, with 
$X$ regular, the generic fiber $X_{\eta}$ smooth, and the special fiber
isomorphic to $X_0$. Further assume that $\pi$ has sections $P$, $Q$,
specializing to $P_1$ and $Q_n$ respectively on $X_0$. 

Suppose that for all vanishing
sequences $a_{\bullet}$ and $b_{\bullet}$ with $\widehat{\rho} \geq 0$,
the spaces $G^r_d(X_0, (P_1,a_{\bullet}),(Q_n,b_{\bullet}))$ are
reduced of expected dimension $\rho$, respectively. Then for each
$a_{\bullet},b_{\bullet}$, there is a moduli space
$G^r_d(X/B, (P,a_{\bullet}),(Q,b_{\bullet}))$ proper and flat over $B$,
with special and generic fibers isomorphic to
$G^r_d(X_0, (P_1,a_{\bullet}),(Q_n,b_{\bullet}))$ and
$G^r_d(X_{\eta}, (P_{\eta},a_{\bullet}),(Q_{\eta},b_{\bullet}))$, 
respectively.
\end{thm}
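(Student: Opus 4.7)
The plan is to invoke the moduli-space machinery of \cite{os20} and \cite{o-m1} and reduce the claim to a short dimension count. That machinery, applied to the regular proper family $\pi\colon X\to B$ together with its sections $P,Q$, furnishes a proper $B$-scheme
$$Y := G^r_d(X/B,(P,a_{\bullet}),(Q,b_{\bullet}))$$
constructed as a determinantal subscheme inside a relative Grassmann bundle over the relative (compactified) Picard scheme of $X/B$, whose formation commutes with base change. Base change to the generic point of $B$ recovers the classical linear series space $G^r_d(X_\eta,(P_\eta,a_{\bullet}),(Q_\eta,b_{\bullet}))$, and base change to the closed point recovers, \emph{scheme-theoretically}, the limit linear series space $G^r_d(X_0,(P_1,a_{\bullet}),(Q_n,b_{\bullet}))$ with the scheme structure specified after Definition \ref{defn:lls}. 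With this input in hand, the only content left to verify is flatness of $Y$ over $B$.

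The determinantal description of $Y$ places it inside a $B$-smooth ambient scheme, cut out by equations of expected total codimension that matches the fiberwise expected codimension $N-\rho$. Consequently every irreducible component of $Y$ has dimension at least $\rho+1$ (the extra one coming from the base direction). Since $B$ is the spectrum of a DVR, flatness of $Y\to B$ is equivalent to the statement that no associated point of $Y$ maps to the closed point $s\in B$, and in particular it suffices to show that no irreducible component of $Y$ is contained in the special fiber $Y_s$. The hypothesis that $G^r_d(X_0,(P_1,a_{\bullet}),(Q_n,b_{\bullet}))$ has pure dimension $\rho$ rules this out: any component supported in $Y_s$ would have dimension $\rho$, contradicting the lower bound $\rho+1$. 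Hence every component of $Y$ dominates $B$, and $Y$ is flat over $B$. Properness, together with the fiber identifications, has already been arranged by the construction.

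The conceptual obstacle, as the theorem's billing as a consequence of \cite{o-m1} suggests, lives not here but in \cite{o-m1} itself: one must know that the moduli problem admits a representing scheme whose formation commutes with base change, whose special fiber carries the correct scheme-theoretic structure as a product of smooth-curve moduli spaces indexed by refined vanishing sequences, and which embeds into a $B$-smooth ambient scheme as a determinantal locus of the expected codimension. Granted these inputs, the present statement is a one-paragraph exercise in dimension theory. The reducedness hypothesis on $Y_s$ is not strictly needed for the present flatness argument, but is included because it, together with the flatness proved here, is what Theorem \ref{thm:main} ultimately needs in order to transfer reducedness between special and generic fibers in the applications that follow.
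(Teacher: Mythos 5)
Your reduction of flatness to a dimension count has a genuine gap, and it sits exactly where you dismiss the reducedness hypothesis. Over a DVR, flatness of $Y\to B$ is indeed equivalent to no \emph{associated} point of $Y$ lying over the closed point, but you then say it ``suffices to show that no irreducible component of $Y$ is contained in the special fiber.'' That is not sufficient: associated points include embedded points, and a scheme can have every irreducible component dominating $B$ while still carrying an embedded associated point over the closed point, in which case it is not flat. A minimal example is $Y=\Spec R[x]/(x^2,tx)$ over $B=\Spec R$ with uniformizer $t$: the unique irreducible component is $\Spec R$, which dominates $B$ and has the same fiber dimension on both sides, yet $x$ is $t$-torsion and $Y$ is not flat. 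The feature that fails in this example is precisely reducedness of the special fiber, so your closing remark that ``the reducedness hypothesis on $Y_s$ is not strictly needed for the present flatness argument'' is wrong; reducedness (or some substitute such as Cohen--Macaulayness of $Y$) is what rules out embedded points over the closed point, and one can check that over a DVR the combination ``special fiber reduced'' plus ``no component contained in the special fiber'' does imply flatness. So your argument is repairable, but only by reinstating the hypothesis you discarded.

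This is also where your route diverges from the paper's. The paper does not argue via a bare determinantal/codimension count (the limit linear series space is not simply a determinantal locus of expected codimension, which is why the lower bound on component dimensions is not automatic); instead it records that the construction inside a $B$-smooth ambient space gives $G^r_d(X/B,(P,a_{\bullet}),(Q,b_{\bullet}))$ \emph{universal relative dimension at least} $\rho$ in the sense of Definition 3.1 of \cite{os21}, and then invokes Proposition 3.7 of \cite{os21}, whose hypotheses are exactly that lower bound \emph{together with} reducedness of the special fiber of pure dimension $\rho$, to conclude flatness. (The paper also first produces the flat proper space $G^r_d(X/B)$ without ramification via Corollary 3.4 of \cite{o-m1}, using that refined limit linear series are dense, and only then imposes the ramification conditions; that is a presentational difference.) If you restore the reducedness input and justify the component-dimension lower bound by appeal to the universal relative dimension statement rather than to an asserted determinantal description, your argument becomes essentially the paper's.
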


\begin{proof} 
Set $\rho_0=g-(r+1)(g+r-d)$. The hypotheses imply that
$G^r_d(X_0)$ is reduced of dimension $\rho_0$, 
with the refined limit linear series forming a dense open subset, since
any additional ramification results in dimension strictly smaller than
$\rho_0$.
Then according to Corollary 3.4 of \cite{o-m1}, there is a scheme
$G^r_d(X/B)$ which is flat and proper over $B$, with generic fiber 
isomorphic to 
$G^r_d(X_{\eta})$, and with special fiber isomorphic to $G^r_d(X_0)$.
If we now impose vanishing sequence at least $a_{\bullet}$ along $P$
and at least $b_{\bullet}$ along $Q$, we obtain a scheme
$G^r_d(X/B, (P,a_{\bullet}),(Q,b_{\bullet}))$ still proper over $B$,
with generic fiber isomorphic to 
$G^r_d(X_{\eta}, (P_{\eta},a_{\bullet}),(Q_{\eta},b_{\bullet}))$, and
special fiber isomorphic to
$G^r_d(X_0, (P_1,a_{\bullet}),(Q_2,b_{\bullet}))$. By hypothesis, the
latter is reduced, of pure dimension $\rho$. Moreover, the construction
of $G^r_d(X/B)$ inside of an ambient space smooth over $B$ ensures that it has
universal relative dimension at least $\rho_0$ in the sense of
Definition 3.1 of \cite{os21}, and considering the imposition of the 
addition ramification 
conditions inside of the same smooth ambient space we see that
$G^r_d(X/B, (P,a_{\bullet}),(Q,b_{\bullet}))$ has universal relative
dimension at least $\rho$. It then follows from the reducedness of
$G^r_d(X_0, (P_1,a_{\bullet}),(Q_2,b_{\bullet}))$ and Proposition 3.7
of \cite{os21} that 
$G^r_d(X/B, (P,a_{\bullet}),(Q,b_{\bullet}))$ is flat over $B$, as
desired.
\end{proof}

\begin{figure}
\centering
\input{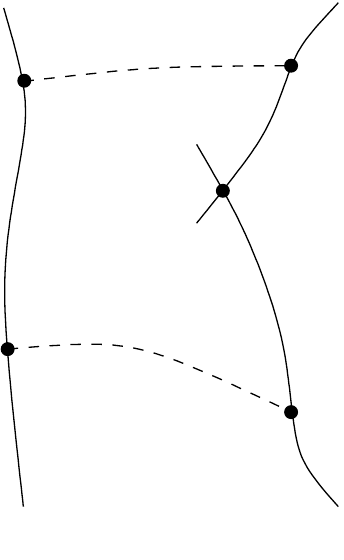_t}
\caption{The degeneration considered in Theorem \ref{thm:family-recall}
in the case $n=2$.}
\label{fig:degen}
\end{figure}

We now conclude the following general reducedness statement. Note that 
the reducedness portion of Theorem 
\ref{thm:main} is the special case with $n=1$.

\begin{cor}\label{cor:lls-reduced-2} Suppose that the components of 
$X_0$ are general as $2$-marked curves. Given vanishing sequences
$a_{\bullet}$, $b_{\bullet}$, the space 
$G^r_d(X_0,(P_1,a_{\bullet}),(Q_n,b_{\bullet}))$ of limit linear 
series on $X_0$ is reduced of pure dimension $\rho$.
\end{cor}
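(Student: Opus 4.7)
The plan is to reduce the statement to a componentwise reducedness result for each smooth $Z_i$, and then to prove that componentwise statement by smoothing from a chain of genus-$1$ curves where Corollary \ref{cor:lls-reduced-1} already applies.

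First, I would note that the discussion preceding Corollary \ref{cor:lls-reduced-1} already decomposes $G^r_d(X_0,(P_1,a_{\bullet}),(Q_n,b_{\bullet}))$ scheme-theoretically as the union, over tuples of complementary vanishing sequences at the nodes, of products of spaces $G^r_d(Z_i,(P_i,a^i_{\bullet}),(Q_i,b^i_{\bullet}))$. By Lemma 3.6 of \cite{e-h1} the Brill-Noether numbers of the factors add to $\rho$, so it suffices to show that for each component $Z_i$ and every pair $a'_\bullet,b'_\bullet$ (for the genus $g_i$ of $Z_i$) with $\widehat{\rho}\geq 0$, the space $G^r_d(Z_i,(P_i,a'_\bullet),(Q_i,b'_\bullet))$ is reduced of pure dimension $\rho$: a union of such reduced products will then itself be reduced of pure dimension $\rho$.

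The main work is therefore the componentwise claim, for a general $2$-marked smooth projective curve $(Z,P,Q)$ of any genus $g'$ and every $a'_\bullet,b'_\bullet$ with $\widehat{\rho}\geq 0$. I would prove this by smoothing. Fix $g'$, choose a chain $\widetilde{Z}=Y_1\cup\dots\cup Y_{g'}$ of genus-$1$ curves with markings satisfying the genericity hypotheses of Corollary \ref{cor:lls-reduced-1}, and pick a one-parameter smoothing $Y\to B$ over the spectrum of a DVR with special fiber $\widetilde{Z}$ and sections specializing to the outer marked points $P_1,Q_{g'}$. Corollary \ref{cor:lls-reduced-1} applied to $\widetilde{Z}$ ensures that for every $a'_\bullet,b'_\bullet$ with $\widehat{\rho}\geq 0$ the limit linear series space on $\widetilde{Z}$ is reduced of pure dimension $\rho$. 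This is exactly the hypothesis of Theorem \ref{thm:family-recall}, which then produces, for each such $(a'_\bullet,b'_\bullet)$, a proper flat family $G^r_d(Y/B,(P,a'_\bullet),(Q,b'_\bullet))\to B$ with reduced special fiber and generic fiber isomorphic to $G^r_d(Y_\eta,(P_\eta,a'_\bullet),(Q_\eta,b'_\bullet))$.

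Next I would argue that the generic fiber of such a flat family is reduced. The point is standard: flatness over a DVR means the uniformizer $t$ is a nonzerodivisor on the total space; any nilpotent would restrict to a nilpotent on the reduced special fiber, hence vanish, hence be divisible by $t$, and iterating together with Krull's intersection theorem forces it to be zero. So the total space is reduced, its generic fiber $G^r_d(Y_\eta,(P_\eta,a'_\bullet),(Q_\eta,b'_\bullet))$ is reduced, and it has pure dimension $\rho$ by flatness. This exhibits at least one smooth $2$-marked curve of genus $g'$ on which every relevant $G^r_d$-space is reduced of the expected dimension; since this condition is open on the moduli of smooth $2$-marked curves (reducedness of fibers is open in proper flat families, and dimension equal to the universal lower bound $\rho$ is open by upper semicontinuity), it holds for general $(Z,P,Q)$. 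Feeding this back into the decomposition from the first paragraph completes the argument. The main obstacle I expect is essentially bookkeeping, making sure Theorem \ref{thm:family-recall} can be invoked uniformly for every relevant $(a'_\bullet,b'_\bullet)$ on $\widetilde{Z}$; the actual reducedness input is already isolated in Corollary \ref{cor:lls-reduced-1}, and the transfer of reducedness from special to generic fiber is standard commutative algebra.
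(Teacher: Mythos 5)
Your proposal is correct in outline but takes a genuinely different route from the paper. The paper proves the irreducible ($n=1$) case by induction on the genus: at each step it degenerates a general curve of genus $g$ to a two-component curve $Z_1\cup Z_2$ with $Z_1$ of genus $1$ and $Z_2$ general of genus $g-1$, so that reducedness of the limit linear series space on the special fiber comes from Proposition \ref{prop:base-cases} on $Z_1$ and the induction hypothesis on $Z_2$; it then uses Theorem \ref{thm:family-recall} together with Theorem 12.2.1 of \cite{ega43} to transfer geometric reducedness to the generic fiber, and openness in families to pass to a general curve. You instead degenerate in a single step all the way to a chain of $g'$ genus-$1$ curves, where Corollary \ref{cor:lls-reduced-1} applies directly, which eliminates the induction entirely; the reduction of the chain case to the irreducible case via the decomposition into products of the $G^r_d(Z_i,(P_i,a^i_{\bullet}),(Q_i,b^i_{\bullet}))$ is the same in both arguments. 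Your route is arguably cleaner, at the cost of invoking Theorem \ref{thm:family-recall} for a longer chain (which is legitimate: Situation \ref{sit:chain} and that theorem allow arbitrary $n$, and a regular one-parameter smoothing of such a chain exists).

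The step that needs repair is your hands-on transfer of reducedness from the special to the generic fiber. First, ``flat over a DVR with reduced special fiber implies reduced total space'' is false without using properness: an associated point of the total space lying over the generic point of $B$ never meets the special fiber, and the Krull-intersection argument says nothing about it (the disjoint union of $\Spec R$ with a nonreduced scheme over the fraction field is flat with reduced special fiber). Properness rescues the argument --- every associated point of a proper $B$-scheme specializes into the special fiber, and one can then localize there --- but this must be said. Second, and more consequentially for your final openness step, the elementary argument only yields that the generic fiber is reduced over the fraction field $K$ of the DVR, whereas the condition that is open in flat proper families is \emph{geometric} reducedness of fibers; since $K$ need not be perfect and the paper works in arbitrary characteristic, reducedness over $K$ does not by itself place $X_\eta$ in the good open locus of a versal family. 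Both issues disappear if you cite Theorem 12.2.1 of \cite{ega43}, as the paper does, to get geometric reducedness of the generic fiber directly from that of the special fiber over the algebraically closed residue field.
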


\begin{proof} We induct on genus, assuming that we know
the desired statement for irreducible curves of genus strictly less than $g$,
and proving it for irreducible curves of genus $g$. As discussed above,
this implies the desired statement also for spaces of limit linear series
when all components have genus at most $g$. The base case is that
$g$ is $0$ or $1$, so is handled by Proposition \ref{prop:base-cases}.
Consider $n=2$, with $Z_1$ a smooth
projective curve of genus $1$, and $P_1,Q_1 \in Z_1$ marked points not 
differing by 
$m$-torsion for any $m \leq d$, and with $Z_2$ a general smooth projective
curve of genus $g-1$, and $P_2,Q_2 \in Z_2$ general marked points.
Then let $\pi:X \to B$ be as in Theorem \ref{thm:family-recall}.
By the induction hypothesis, $G^r_d(X_0, (P_1,a_{\bullet}),(Q_2,b_{\bullet}))$
is reduced of pure dimension $\rho$ for all choices of 
$a_{\bullet},b_{\bullet}$, so Theorem \ref{thm:family-recall} gives us
that $G^r_d(X/B, (P,a_{\bullet}),(Q,b_{\bullet}))$ is flat and proper over 
$B$, and then Theorem 12.2.1 of \cite{ega43} implies that the generic fiber
$G^r_d(X_{\eta}, (P_{\eta},a_{\bullet}),(Q_{\eta},b_{\bullet}))$ is
geometrically reduced. Since this condition is open in families,
we conclude that the corollary holds for irreducible curves of genus $g$,
as desired.
\end{proof}

\begin{rem}\label{rem:more-ram} When we consider ramification at three
or more points, the situation is much more delicate, even in genus $0$.
For instance, in positive characteristic, spaces of limit linear series
frequently have greater than the expected dimension due to inseparability
phenomena. Even over the complex numbers, reducedness of spaces of linear 
series with prescribed ramification is quite difficult: the case of genus 
$0$ and $\rho=0$ was proved relatively recently by Mukhin, Tarasov and
Varchenko \cite{m-t-v} while proving the sharper Shapiro-Shapiro conjecture,
but they used a connection to quantum algebra, and there is still no 
direct geometric proof. The case of higher genus then follows via limit
linear series techniques; see \cite{os2}.
\end{rem}

\section{Connectedness}

Having established the necessary reducedness results, we can now prove the 
connectedness portion of Theorem \ref{thm:main}.

We have the following preliminary lemma.

\begin{lem}\label{lem:meets-open} Suppose $\pi:Y \to B$ is flat and 
proper, with geometrically reduced fibers, and $B$ is locally Noetherian
and connected. Let $Z \subseteq Y$ be a closed subscheme which is also
flat over $B$. If there exists $b \in B$ such that every connected
component of $Y_b$ meets $Z$, then the same is true for all $b \in B$.
\end{lem}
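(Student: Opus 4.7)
My plan is to use the Stein factorization of $\pi$ to encode the connected components of the fibers $Y_b$ as fibers of a finite étale cover $f\colon B' \to B$, rephrase the hypothesis as a containment $f^{-1}(b) \subseteq W$ for a closed subset $W \subseteq B'$, and check that the locus of such $b$ is clopen by verifying this étale-locally on $B$ using the flatness of $Z$.

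Since $\pi$ is proper and flat with geometrically reduced fibers, I would appeal to the standard fact that $\pi_*\cO_Y$ is a finite étale $\cO_B$-algebra whose formation commutes with arbitrary base change. Setting $B' := \Spec \pi_*\cO_Y$ with the canonical projection $f\colon B' \to B$, this says that $f$ is finite étale and that its geometric fibers are canonically identified with the sets of geometric connected components of the fibers of $\pi$. The composite $g\colon Z \hookrightarrow Y \to B'$ is proper, so $W := g(Z)$ is closed in $B'$. Under these identifications the condition that every connected component of $Y_b$ meet $Z$ becomes the condition $f^{-1}(b) \subseteq W$; write $B_0$ for the set of such $b$. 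By hypothesis $B_0 \neq \emptyset$, so since $B$ is connected it suffices to show $B_0$ is clopen.

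For this I would argue étale-locally. Fix $b \in B$ and choose an étale neighborhood $(U,u) \to (B,b)$ over which $B'$ trivializes as $B' \times_B U = \bigsqcup_{i=1}^n U_i$ with each $U_i \cong U$. Correspondingly, $Y \times_B U = \bigsqcup Y_i$ (with $Y_i$ the preimage of $U_i$), and hence $Z \times_B U = \bigsqcup Z_i$ with $Z_i := (Z \times_B U) \cap Y_i$ a clopen subscheme of $Z \times_B U$. Each $Z_i$ is therefore flat (as an open subscheme of a flat scheme) and proper (as a closed subscheme of a proper scheme) over $U$. Now a flat morphism of finite presentation is open, and a proper morphism is closed, so the image of each $Z_i$ in $U_i \cong U$ is clopen. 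Consequently $B_0 \times_B U$, which under these identifications is the intersection over $i$ of these clopen images, is itself clopen in $U$. Since $U \to B$ is étale and in particular open, this implies $B_0$ is clopen in a neighborhood of $b$, and since $b$ was arbitrary, $B_0$ is clopen in $B$.

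The main obstacle — and the place where the flatness hypothesis on $Z$ is essential — is concluding that the image of each $Z_i$ is \emph{open} (rather than just closed), which is exactly what forces $B_0$ to be open. Without flatness, one could have $W$ a closed subset of $B'$ not stable under generization (for instance if $Z$ were supported on a proper closed subscheme of $B$), and $B_0$ would only be closed. The geometric reducedness of the fibers of $\pi$ is equally crucial, since it guarantees that $f\colon B' \to B$ is actually étale and trivializes étale-locally, allowing the argument to reduce to the individual branches $Z_i$.
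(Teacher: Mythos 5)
Your argument is correct. It rests on the same two pillars as the paper's proof --- (a) a decomposition of $Y$ into pieces whose fibers are connected, which is where geometric reducedness enters via the local constancy of the number of geometric connected components (EGA IV 15.5.7, the same result the paper leans on), and (b) the observation that a piece of $Z$ cut out by this decomposition is flat and proper over the base, hence has clopen image --- but you organize them differently. The paper first reduces to the case of a DVR by checking stability of the condition under specialization and generization (which forces it to verify invariance of the condition under base field extension), and only then invokes the decomposition of $Y$ over the DVR. You instead work globally: you package the decomposition as the finite \'etale algebra $B'=\Spec\pi_*\sO_Y$, translate the hypothesis into the containment $f^{-1}(b)\subseteq W$ for the closed set $W=g(Z)$, and prove the locus is clopen by trivializing $B'$ \'etale-locally. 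This buys a cleaner statement of what is being proved open-and-closed, avoids the DVR reduction and the base-field-extension discussion entirely (your $f^{-1}(b)$ consists of actual, not geometric, points, so it matches actual connected components of $Y_b$), and isolates precisely where flatness of $Z$ is used (openness of the image of each $Z_i$). The only points you pass over quickly are routine: the compatibility of taking images with the \'etale base change $U\to B$ (image of a base change equals preimage of the image), and the fact that $Z\to B$ is of finite presentation (automatic here since $B$ is locally Noetherian and $Z$ is of finite type over $B$). Neither is a gap.
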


\begin{proof} As $B$ is assumed connected, it is enough to show that 
the set of $b \in B$ such that every connected component of $Y_b$
meets $Z$ is closed under both specialization and generization.
Observe that the condition on $Y_b$ is invariant under extension of
base field, because the image of a connected component under a base
field extension is a connected component (see for instance Tag 04PZ
of \cite{stacks-proj}, although the proof is easier in the finite type
case).
Thus, the statement reduces to the case that $B$ is the
spectrum of a DVR. In fact, following the reasoning in the proof of
Proposition 15.5.7 of \cite{ega43}, we may further assume that the
connected components of the fibers are geometrically connected, and
we then obtain that $Y$ decomposes as a disjoint union of connected 
components, each with a unique connected component in both fibers.
In this situation, the desired statement follows immediately from the
flatness and properness of $Z$.
\end{proof}

We also have the following lemma on subadditivity of $\widehat{\rho}$, which
follows easily from the definitions.

\begin{lem}\label{lem:subadditive} Given $g,r,d$, $a_{\bullet}$, 
$b_{\bullet}$ as in Theorem \ref{thm:bg}, suppose we have also
$b^1_{\bullet}$ and $a^2_{\bullet}$ complementary vanishing sequences,
and $g_1,g_2$ nonnegative with $g_1+g_2=g$. Define 
$\widehat{\rho}_1$ using $g_1$ and $b^1_{\bullet}$ in place of $g$ and 
$b_{\bullet}$, and define 
$\widehat{\rho}_2$ using $g_2$ and $a^2_{\bullet}$ in place of $g$ and 
$a_{\bullet}$. Then we have
$$\widehat{\rho} \geq \widehat{\rho}_1+\widehat{\rho}_2,$$
with equality occurring if and only if for every $j$ with
$a_j+b^1_{r-j}>d-g_1$, we have $a^2_j+b_{r-j} \geq d-g_2$, and
for every $j$ with
$a^2_j+b_{r-j}>d-g_2$, we have $a_j+b^1_{r-j} \geq d-g_1$.
\end{lem}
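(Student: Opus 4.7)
The plan is purely combinatorial: unpack the three definitions of $\widehat{\rho}$, exploit the complementarity $b^1_j + a^2_{r-j}=d$ to relate their summands, and then invoke subadditivity of the function $x \mapsto \max(0,x)$.

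First I would introduce the shorthand $X_j := a_j + b^1_{r-j} - (d-g_1)$ and $Y_j := a^2_j + b_{r-j} - (d-g_2)$, so that
\[
\widehat{\rho}_1 = g_1 - \sum_{j} \max(0,X_j), \qquad \widehat{\rho}_2 = g_2 - \sum_{j} \max(0,Y_j).
\]
Using complementarity in the form $a^2_j = d - b^1_{r-j}$, a one-line computation gives $X_j + Y_j = a_j + b_{r-j} - (d-g)$, and hence
\[
\widehat{\rho} = g - \sum_{j} \max(0,\, X_j + Y_j).
\]
Subtracting, the claim becomes the termwise inequality
\[
\max(0,X_j) + \max(0,Y_j) \;\geq\; \max(0,X_j + Y_j),
\]
summed over $j$.

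This termwise inequality is just subadditivity of $x \mapsto \max(0,x)$, which one checks by a trivial case analysis on the signs of $X_j, Y_j$. The same case analysis pins down when equality holds: if $X_j \geq 0$ and $Y_j \geq 0$, or if $X_j \leq 0$ and $Y_j \leq 0$, both sides agree; whereas if the two have strictly opposite signs, the left side strictly exceeds the right. Translating back, equality in the summed inequality is equivalent to the assertion that for no $j$ do we have $X_j > 0 > Y_j$ or $Y_j > 0 > X_j$, which is precisely the stated condition.

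There is no real obstacle here; the only thing to be careful about is the bookkeeping of signs and the reinterpretation of ``$X_j > 0$'' and ``$Y_j \geq 0$'' as the inequalities on $a_j + b^1_{r-j}$ versus $d - g_1$ and on $a^2_j + b_{r-j}$ versus $d - g_2$ that appear in the statement. Since the three sums share the common grouping index $j$ (summing over $j=0,\dots,r$), no reindexing is needed, and the argument is complete once the termwise identity $X_j + Y_j = a_j + b_{r-j} - (d-g)$ is recorded.
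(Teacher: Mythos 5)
Your proof is correct and is exactly the direct verification the paper alludes to when it says the lemma ``follows easily from the definitions'' (the paper in fact omits the proof entirely). The key identity $X_j + Y_j = a_j + b_{r-j} - (d-g)$ via complementarity, the termwise subadditivity of $x \mapsto \max(0,x)$, and the sign case analysis for equality are all accurate and complete.
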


In order to prove the connectedness portion of Theorem \ref{thm:main},
we will need to induct on a more refined statement that involves also
the case $\widehat{\rho}=0$. Specifically, we prove the following:

\begin{thm}\label{thm:connected} In the situation of Theorem \ref{thm:bg},
if we have $\widehat{\rho} \geq 1$, then the moduli space 
$G^r_d(X,(P,a_{\bullet}),(Q,b_{\bullet}))$ is connected. 

If $\widehat{\rho}=0$, and if for some $j_0$ we can define a vanishing sequence
$\bar{a}_{\bullet}$ by replacing $a_{j_0}$ with $a_{j_0}+1$, while 
maintaining the condition $\widehat{\rho}=0$, then every connected component of 
$G^r_d(X,(P,a_{\bullet}),(Q,b_{\bullet}))$ has nonempty intersection with
the closed subscheme
$G^r_d(X,(P,\bar{a}_{\bullet}),(Q,b_{\bullet}))$.
\end{thm}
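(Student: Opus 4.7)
The plan is to induct on the genus $g$ of $X$, proving both statements simultaneously. The base case $g \leq 1$ is handled by Proposition \ref{prop:base-cases}: the space is irreducible, hence connected, which settles the first statement; for the second, Theorem \ref{thm:bg} ensures $G^r_d(X,(P,\bar{a}_{\bullet}),(Q,b_{\bullet}))$ is nonempty (since replacing $a_{j_0}$ by $a_{j_0}+1$ preserves $\widehat{\rho}=0$), and the single component of $G^r_d(X,(P,a_{\bullet}),(Q,b_{\bullet}))$ must meet it.

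For the inductive step with $g \geq 2$, I degenerate to $X_0 = Z_1 \cup Z_2$ with $Z_1$ a smooth genus-$1$ curve such that $P_1-Q_1$ is not $m$-torsion for any $m \leq d$, and $Z_2$ a general smooth curve of genus $g-1$ with general marked points. By Corollary \ref{cor:lls-reduced-2} and Theorem \ref{thm:family-recall}, there is a flat, proper family $G^r_d(X/B,(P,a_{\bullet}),(Q,b_{\bullet})) \to B$ over a DVR $B$ with geometrically reduced fibers, specializing $G^r_d(X_{\eta},\ldots)$ to $G^r_d(X_0,\ldots)$. The rank of the locally free sheaf $\pi_* \cO$ is then constant and equals the number of geometric connected components of each fiber, so connectedness of $G^r_d(X_0,\ldots)$ transfers to $G^r_d(X_{\eta},\ldots)$. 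For the second statement, Theorem \ref{thm:family-recall} applied to $\bar{a}_{\bullet}$ produces a flat closed subscheme $G^r_d(X/B,(P,\bar{a}_{\bullet}),(Q,b_{\bullet}))$ of the ambient family, and Lemma \ref{lem:meets-open} transfers the component-meeting property from the special to the generic fiber. Thus it suffices to prove both statements for $X_0$.

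Writing $G^r_d(X_0,(P_1,a_{\bullet}),(Q_2,b_{\bullet})) = \bigcup_{(b^1_{\bullet},a^2_{\bullet})} W_{b^1_{\bullet},a^2_{\bullet}}$ with $W_{b^1_{\bullet},a^2_{\bullet}} = G^r_d(Z_1,(P_1,a_{\bullet}),(Q_1,b^1_{\bullet})) \times G^r_d(Z_2,(P_2,a^2_{\bullet}),(Q_2,b_{\bullet}))$ indexed by complementary pairs, and setting $\widehat{\rho}_1 = \widehat{\rho}(a_{\bullet},b^1_{\bullet};1)$, $\widehat{\rho}_2 = \widehat{\rho}(a^2_{\bullet},b_{\bullet};g-1)$, Lemma \ref{lem:subadditive} gives $\widehat{\rho} \geq \widehat{\rho}_1+\widehat{\rho}_2$. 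By Proposition \ref{prop:base-cases}, the $Z_1$-factor is irreducible when $\widehat{\rho}_1 \geq 0$, and by the inductive hypothesis the $Z_2$-factor is connected when $\widehat{\rho}_2 \geq 1$; when $\widehat{\rho}_2 = 0$, part (ii) applied inductively shows every component of the $Z_2$-factor meets the closed subscheme obtained by raising one entry of $a^2_{\bullet}$ by $1$. Since $W_{b^1_{\bullet},a^2_{\bullet}} \cap W_{\tilde{b}^1_{\bullet},\tilde{a}^2_{\bullet}}$ equals $G^r_d(Z_1,\ldots,\max(b^1_{\bullet},\tilde{b}^1_{\bullet})) \times G^r_d(Z_2,\ldots,\max(a^2_{\bullet},\tilde{a}^2_{\bullet}))$, raising such an entry of $a^2_{\bullet}$ (with the corresponding drop in $b^1_{\bullet}$) corresponds precisely to transitioning to a neighboring piece in the poset of complementary pairs, so the refined induction links adjacent pieces of the cover.

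The main obstacle is then a purely combinatorial connectedness check on this poset: under $\widehat{\rho} \geq 1$ (resp.\ under the hypothesis of part (ii) with $\widehat{\rho}=0$), show that the nerve of nonempty pieces is connected (resp.\ that every component of the cover connects through pieces to the enlarged stratum imposed by $\bar{a}_{\bullet}$). The equality criterion in Lemma \ref{lem:subadditive} is the decisive tool, as it pinpoints exactly when a unit move in $b^1_{\bullet}$ decreases $\widehat{\rho}_2$ by $0$ or by $1$, and thereby determines which single-step moves preserve nonemptiness of the pieces; the hypothesis $\widehat{\rho} \geq 1$ supplies the slack needed to keep $\widehat{\rho}_1, \widehat{\rho}_2 \geq 0$ along a path between any two pairs, while part (ii) on $Z_2$ does the work of passing components across the $\widehat{\rho}_2 = 0$ boundary. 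Part (ii) of the theorem for $X_0$ is proved by the same strategy, tracking how raising the entry $a_{j_0}$ redistributes across the factors $Z_1$ and $Z_2$.
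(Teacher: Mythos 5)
Your setup reproduces the paper's strategy faithfully: induction on genus, degeneration to $Z_1\cup Z_2$ with $Z_1$ of genus $1$, transfer of connectedness (and of the component-meeting property) between special and generic fibers via flatness, properness, and geometric reducedness, and the decomposition of the limit linear series space into pieces $W_{b^1_{\bullet},a^2_{\bullet}}$ indexed by complementary vanishing sequences at the node. The observation that adjacent pieces intersect along the locus where both vanishing conditions are simultaneously imposed is also correct and is exactly how the paper links pieces.

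However, there is a genuine gap: the combinatorial connectivity of the ``poset of pieces'' is precisely where all the content of the proof lives, and you only state it as a goal rather than proving it. The paper's argument here is not a routine verification. It requires defining two explicit types of moves on the pair $(b^1_{\bullet},a^2_{\bullet})$ (one increasing $b^1_{r-j_0}$ for $j_0$ minimal with $a_{j_0}+b^1_{r-j_0}$ minimal, one trading a unit between $a^2_{j_0}$ and $b^1_{r-j_0}$ for a carefully chosen $j_0$), verifying in each case that the resulting sequences are still strictly increasing (e.g.\ showing $a^2_{j_0-1}<a^2_{j_0}-1$, which requires the minimality of $j_0$ and an appeal to the equality criterion of Lemma \ref{lem:subadditive}), and handling the boundary cases $\widehat{\rho}_1=0$ and $\widehat{\rho}_2=0$ separately, including showing that every configuration connects to one with $\widehat{\rho}_2>0$ so that at least one piece is honestly connected. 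Likewise, for the second statement on $X_0$ you must rule out $a_{j_0}+b^1_{r-j_0}=d$ (this uses $a_{j_0}+b_{r-j_0}<d-g$ together with Lemma \ref{lem:subadditive}) and, in the case $a_{j_0}+b^1_{r-j_0}=d-1$, show that the increment can be pushed onto the $Z_2$-factor while keeping $\widehat{\rho}_2=0$ and keeping $a^2_{\bullet}$ a valid vanishing sequence. None of this is automatic from $\widehat{\rho}\geq 1$ ``supplying slack''; indeed Example \ref{ex:disconnected} shows that when $\widehat{\rho}=0$ the graph of pieces genuinely disconnects, so the argument must use $\widehat{\rho}\geq 1$ in a precise, quantitative way. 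As written, your proposal defers exactly the step that distinguishes a proof from a plan.
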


Note that the hypotheses on $\bar{a}_{\bullet}$ in the second part of the
theorem imply in particular that $\rho \geq 1$.

\begin{proof} We begin by proving the second statement by induction on $g$.
The base case that $g \leq 1$ is satisfied trivially because in this case
$G^r_d(X,(P,a_{\bullet}),(Q,b_{\bullet}))$ is always connected.
For the induction step, we again consider a family $\pi:X \to B$ just as 
in the proof of Corollary \ref{cor:lls-reduced-2}. We now know that
$G^r_d(X_0, (P_1,a_{\bullet}),(Q_2,b_{\bullet}))$ is reduced and 
that 
$G^r_d(X/B, (P,a_{\bullet}),(Q,b_{\bullet}))$ is flat and proper over $B$, 
so according to Proposition 15.5.7 of \cite{ega43}, the number of 
(geometrically) connected components of 
$G^r_d(X/B, (P,a_{\bullet}),(Q,b_{\bullet}))$ is the same in the special
and generic fibers. The same statements also hold for
$G^r_d(X/B, (P,\bar{a}_{\bullet}),(Q,b_{\bullet}))$.
Our main task is to
show that every connected component of the limit linear series space
$G^r_d(X_0,(P_1,a_{\bullet}),(Q_2,b_{\bullet}))$ has nonempty intersection 
with the closed subscheme
$G^r_d(X_0,(P_1,\bar{a}_{\bullet}),(Q_2,b_{\bullet}))$. We will analyze the
situation on each part
$$G^r_d(X_0,(P_1,a_{\bullet}),(Q_2,b_{\bullet}))^{b^1_{\bullet},a^2_{\bullet}}
:=G^r_d(Z_1,(P_1,a_{\bullet}),(Q_1,b^1_{\bullet})) \times
G^r_d(Z_2,(P_2,a^2_{\bullet}),(Q_2,b_{\bullet}))$$ 
of $G^r_d(X_0,(P_1,a_{\bullet}),(Q_2,b_{\bullet}))$, as $b^1_{\bullet}$,
$a^2_{\bullet}$ range over complementary vanishing sequences with
$\widehat{\rho}_1$ and $\widehat{\rho}_2$ both nonnegative.

First, if $a_{j_0}+b^1_{r-j_0}<d-1$, we see that the closed subset
$$G^r_d(Z_1,(P_1,\bar{a}_{\bullet}),(Q_1,b^1_{\bullet})) \times
G^r_d(Z_2,(P_2,a^2_{\bullet}),(Q_2,b_{\bullet}))$$ 
will still be nonempty,
and because $G^r_d(Z_1,(P_1,a_{\bullet}),(Q_1,b^1_{\bullet}))$ is 
connected, we immediately obtain the desired statement, so it is enough
to consider the cases that $a_{j_0}+b^1_{r-j_0}=d$ or 
$a_{j_0}+b^1_{r-j_0}=d-1$. On the other hand, 
our hypothesis that $\widehat{\rho} \geq 0$ is maintained under replacing
$a_{\bullet}$ with $\bar{a}_{\bullet}$ implies that $a_{j_0}+b_{r-j_0}<d-g$,
and then by Lemma \ref{lem:subadditive} we see that we cannot have
$a_{j_0}+b^1_{r-j_0}=d$. Thus, it remains to consider the case that
$a_{j_0}+b^1_{r-j_0}=d-1$. In this case, we necessarily have
$a^2_{j_0}+b_{r-j_0}<d-(g-1)$, and we claim that if $j_0<r$, we must have
$a^2_{j_0+1}>a^2_{j_0}+1$: indeed, we must have $a_{j_0+1}>a_{j_0}+1$
by the validity of $\bar{a}_{\bullet}$, so the only way we could have
$a^2_{j_0+1}=a^2_{j_0}+1$
would be if $a_{j_0+1}+b^1_{r-j_0-1}=d$. But then because $\widehat{\rho}=0$,
Lemma \ref{lem:subadditive} would imply that 
$a^2_{j_0+1}+b_{r-j_0-1}\geq d-(g-1)$, yielding the contradiction that
$b_{r-j_0} \leq b_{r-j_0-1}$.
Thus, we can obtain $\bar{b}^1_{\bullet}$ and $\bar{a}^2_{\bullet}$ by
replacing $b^1_{r-j_0}$ with $b^1_{r-j_0}-1$ and $a^2_{j_0}$ with
$a^2_{j_0}+1$ respectively, and this maintains the condition 
$\widehat{\rho}_2 \geq 0$. The induction hypothesis implies that
every connected component of 
$G^r_d(Z_2,(P_2,a^2_{\bullet}),(Q_2,b_{\bullet}))$ meets
$G^r_d(Z_2,(P_2,\bar{a}^2_{\bullet}),(Q_2,b_{\bullet}))$, so we conclude 
that every connected component of the part
$G^r_d(X_0,(P_1,a_{\bullet}),(Q_2,b_{\bullet}))^{b^1_{\bullet},a^2_{\bullet}}$
meets the part
$G^r_d(X_0,(P_1,a_{\bullet}),(Q_2,b_{\bullet}))^{\bar{b}^1_{\bullet},
\bar{a}^2_{\bullet}}$.
But now we have reduced to the previously handled case, so every
connected component of the part
$G^r_d(X_0,(P_1,a_{\bullet}),(Q_2,b_{\bullet}))^{\bar{b}^1_{\bullet},
\bar{a}^2_{\bullet}}$
meets the subset
$G^r_d(Z_1,(P_1,\bar{a}_{\bullet}),(Q_1,\bar{b}^1_{\bullet})) \times
G^r_d(Z_2,(P_2,\bar{a}^2_{\bullet}),(Q_2,b_{\bullet}))$, proving the
desired statement for $X_0$.
The desired statement for 
$G^r_d(X_{\eta}, (P_{\eta},a_{\bullet}),(Q_{\eta},b_{\bullet}))$ then
follows from Lemma \ref{lem:meets-open}, and considering a versal family
of curves containing $X_{\eta}$ and consisting entirely of curves $X$
and $P,Q$ such
that $G^r_d(X, (P,a_{\bullet}),(Q,b_{\bullet}))$ is geometrically reduced
of dimension $\rho$, the same lemma implies the desired statement for a
general $2$-marked curve.

We next show that when $X$ is a general $2$-marked curve, and 
$\widehat{\rho} \geq 1$, the space 
$G^r_d(X,(P,a_{\bullet}),(Q,b_{\bullet}))$ is connected. The proof will
again be by induction on genus, with the base cases $g=0,1$ having been
proved in Proposition \ref{prop:base-cases}. For the induction step,
we consider the same family $\pi:X \to B$ as before. Using the aforementioned
constancy of geometric connected components (again, both in the family 
$\pi$, and in a suitable versal family), we see that it is enough to
show that $G^r_d(X_0, (P_1,a_{\bullet}),(Q_2,b_{\bullet}))$ is connected.

Now, for any vanishing sequences $b^1_{\bullet}$ and $a^2_{\bullet}$,
we have by Proposition \ref{prop:base-cases} that
$G^r_d(Z_1, (P_1,a_{\bullet}),(Q_1,b^1_{\bullet}))$ is connected when
it is nonempty, and by the induction hypothesis,
$G^r_d(Z_2, (P_2,a^2_{\bullet}),(Q_2,b_{\bullet}))$ is connected when
$\widehat{\rho}_2>0$. Moreover, by the statement we have just proved, even
when $\widehat{\rho}_2=0$, we have that if we increase one term of 
$a^2_{\bullet}$ while maintaining the condition that $\widehat{\rho}_2=0$,
every connected component 
$G^r_d(Z_2, (P_2,a^2_{\bullet}),(Q_2,b_{\bullet}))$ will meet the resulting
closed subscheme. We will consider complementary pairs of
$b^1_{\bullet}$ and $a^2_{\bullet}$ with $\widehat{\rho}_1 \geq 0$ and
$\widehat{\rho}_2 \geq 0$ and show that the parts
$G^r_d(X_0,(P_1,a_{\bullet}),(Q_2,b_{\bullet}))^{b^1_{\bullet},a^2_{\bullet}}$
corresponding to any two such pairs can
be connected to one another through a sequence of moves which increase
exactly one entry of $b^1_{\bullet}$ or $a^2_{\bullet}$ at a time.

There are two types of moves we consider. For the first, we assume that
$\rho_1 > 0$. Let $j_0$
be minimal such that $a_{j_0}+b^1_{r-j_0}$ attains its minimum value,
which is necessarily at most $d-1$. By the minimality assumptions, either
$j_0=0$ or $b^1_{r-j_0+1} \geq b^1_{r-j_0}+2$; in either case, we can
replace $b^1_{r-j_0}$ with $b^1_{r-j_0}+1$ to obtain a valid sequence which
still has $\widehat{\rho}_1 \geq 0$, but with $\rho_1$ decreased by $1$. 
We can then replace
$a^2_{j_0}$ with $a^2_{j_0}-1$, and we obtain a new pair of complementary
sequences which still satisfy the nonemptiness condition. 

For the second type of move, suppose that $\widehat{\rho}_2=0$, and that if
also $\widehat{\rho}_1=0$, then the $j$ with $a_j+b^1_{r-j}=d$ satisfies
$a^2_j+b_{r-j} \geq d-(g-1)$. Then the second type of move is, for
a certain $j_0$, to replace $a^2_{j_0}$ by $a^2_{j_0}-1$, and 
$b^1_{r-j_0}$ by $b^1_{r-j_0}+1$. This will result in a case with
$\widehat{\rho}_1 \geq 0$ still, but $\widehat{\rho}_2>0$. The desired $j_0$
must therefore satisfy the following conditions:
$a_{j_0}+b^1_{r-j_0} \leq d-1$; $a^2_{j_0}+b_{r-j_0}>d-(g-1)$; and 
either $j_0=0$ or $a^2_{j_0-1}<a^2_{j_0}-1$. Moreover, if $\widehat{\rho}_1=0$,
we need $a_{j_0}+b^1_{r-j_0}<d-1$. Now, if $\widehat{\rho}_1>0$, the
condition $a_{j_0}+b^1_{j_0} \leq d-1$ is automatic, and we obtain such a 
$j_0$ by letting $j_0$ be minimal with $a^2_{j_0}+b_{r-{j_0}}$ maximal.
If $\widehat{\rho}_1=0$, because $\widehat{\rho}>0$ Lemma \ref{lem:subadditive}
implies that we must have some $j$
with $a^2_{j}+b_{r-j}>d-(g-1)$, and $a_{j}+b^1_{r-j}<d-1$.  Let $j_0$
be minimal such that $a^2_{j_0}+b_{r-j_0}$ attains the maximal value
subject to the condition that $a_{j_0}+b^1_{r-j_0}<d-1$. By construction,
if $j_0>0$, we either have $a^2_{j_0-1}+b_{r-j_0+1}<a^2_{j_0}+b_{r-j_0}$ or
$a_{j_0-1}+b^1_{r-j_0+1} > a_{j_0}+b^1_{r-j_0}$ and in either case we see
that $a^2_{j_0-1}<a^2_{j_0}-1$, as claimed. Thus, we have produced $j_0$
satisfying the desired conditions.

Now, suppose we start with $\widehat{\rho}_1>0$; applying the first type of
move repeatedly necessarily brings us eventually to the single case that 
$a_j+b^1_{r-j}=d-1$ for all $j$. Then applying the second type of move once,
we see that this case is connected to a case with $\widehat{\rho}_2>0$.
Finally, we show that any case with $\widehat{\rho}_1=0$ is connected to a
case with $\widehat{\rho}_1>0$. Let $j_0$ be the value with
$a_{j_0}+b^1_{r-j_0}=d$; then we must have either $j_0=0$ or
$a_{j_0-1}+b^1_{r-j_0+1}=d-1$, so in either case we can decrease 
$b^1_{r-j_0}$ by $1$. If $a^2_{j_0}+b_{r-j_0}<d-(g-1)$, 
we can increase $a^2_{j_0}$ by $1$, and will still necessarily have 
$\widehat{\rho}_2 \geq 0$, connecting us to the desired case with
$\widehat{\rho}_1>0$. On the other hand, if $a^2_{j_0}+b_{r-j_0} \geq d-(g-1)$, 
then either $\widehat{\rho}_2 >0$, or we can apply the second type of move
to connect to a case with $\widehat{\rho}_2>0$. Once $\widehat{\rho}_2>0$,
we again find that we can increase $a^2_{j_0}$ by $1$, yielding the
desired case with $\widehat{\rho}_1>0$. To summarize, we have seen first
that all cases are connected to one another, and second, that at least
some cases have $\widehat{\rho}_2>0$, so that the corresponding product
space is connected. Thus, we have represented 
$G^r_d(X_0, (P_1,a_{\bullet}),(Q_2,b_{\bullet}))$ as a union of
the parts 
$G^r_d(X_0,(P_1,a_{\bullet}),(Q_2,b_{\bullet}))^{b^1_{\bullet},a^2_{\bullet}}$,
where each part may be represented as the vertex of a connected graph, 
and the edges correspond to the above-described moves. 
Using what we have already proved for the case $\widehat{\rho}_2=0$, 
we see that the part corresponding to each vertex has every connected
component meeting the part corresponding to any adjacent vertex,
and we also have that at least one of the parts is connected. It follows 
that the union is connected, as desired.
\end{proof}

We now easily conclude our main theorem.

\begin{proof}[Proof of Theorem \ref{thm:main}] We have already proved
the first statement, as the $n=1$ case of Corollary \ref{cor:lls-reduced-2}.
Theorem \ref{thm:connected} gives us that
$G^r_d(X,(P,a_{\bullet}),(Q,b_{\bullet}))$ is connected when $X$ is a 
general $2$-marked curve and $\widehat{\rho} \geq 1$.
Finally, because the moduli spaces of linear series are univerally open over 
the locus on $\cM_g$ on which they have the expected dimension (as 
explained for instance in the proof of Theorem \ref{thm:family-recall}), we 
can conclude the connectedness assertion of Theorem \ref{thm:main} by
specialization from general curves, using Corollary 15.5.4 of \cite{ega43}.
\end{proof}

\begin{ex}\label{ex:disconnected} We see that the connectedness statement
fails quite quickly when we have $\widehat{\rho}=0$. Indeed, 
consider the case $g=2$, $r=2$, $d=6$, with $a_{\bullet}=0,2,3$ and
$b_{\bullet}=0,3,5$. This has $\rho=1$ but $\widehat{\rho}=0$. Consider
a degeneration to two genus-$1$ components. We see that the only 
possibilities for $b^1_{\bullet}$ which have both
$\widehat{\rho}_1 \geq 0$ and $\widehat{\rho}_2 \geq 0$ are
$1,3,6$; $2,3,6$; $1,4,5$; or $2,4,5$. The first pair and second pair
correspond to intersecting components of the limit linear series space,
but neither component from the first pair intersects either component
of the second pair, so we see that the limit linear series space is
disconnected in this case. Since the space is nonetheless reduced of
the expected dimension, it follows that 
$G^r_d(X,(P,a_{\bullet}),(Q,b_{\bullet}))$ likewise has two connected
components for $(X,P,Q)$ a general $2$-marked curve of genus $2$.

We also consider the second part of Theorem \ref{thm:connected} in
this case. The only possible choice of $\bar{a}_{\bullet}$ will be $0,2,4$.
This results in two limit linear series, with
$b^1_{\bullet}$ given by $1,3,6$ or $1,4,5$. Thus, we see that only two
of the four irreducible components of 
$G^r_d(X_0,(P_1,a_{\bullet}),(Q_2,b_{\bullet}))$ meet
$G^r_d(X_0,(P_1,\bar{a}_{\bullet}),(Q_2,b_{\bullet}))$, but that, as
the theorem asserts, both connected components meet it. 
\end{ex}

Finally, we note that we can conclude connectedness more generally for
spaces of limit linear series:

\begin{cor}\label{cor:lls-connected} In Situation \ref{sit:chain}, 
given also vanishing sequences $a_{\bullet}$, $b_{\bullet}$, suppose
also that $\widehat{\rho} \geq 1$, and that the space 
$G^r_d(X_0,(P_1,a_{\bullet}),(Q_n,b_{\bullet}))$ is reduced of dimension
$\rho$. Then
$G^r_d(X_0,(P_1,a_{\bullet}),(Q_n,b_{\bullet}))$ is connected.
\end{cor}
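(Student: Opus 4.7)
The plan is to mimic the combinatorial strategy from the proof of Theorem \ref{thm:connected} node-by-node along the chain $X_0$, using Theorem \ref{thm:main} as a black box on each smooth component $Z_i$. By the discussion following Definition \ref{defn:lls}, I would first write
$$G^r_d(X_0, (P_1, a_\bullet), (Q_n, b_\bullet)) = \bigcup \, \prod_{i=1}^n G^r_d\bigl(Z_i, (P_i, a^i_\bullet), (Q_i, b^i_\bullet)\bigr)$$
as a scheme-theoretic union over tuples $(b^i_\bullet, a^{i+1}_\bullet)_{i=1}^{n-1}$ of complementary vanishing sequences, with $a^1_\bullet = a_\bullet$ and $b^n_\bullet = b_\bullet$. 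The reducedness and pure dimension $\rho$ of the whole, together with the additivity of $\rho$ over components (Lemma 3.6 of \cite{e-h1}), force every factor of each full-dimensional nonempty part to be reduced of its expected dimension $\rho_i$.

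Next, I would apply Theorem \ref{thm:main} to each individual factor $(Z_i,P_i,Q_i)$: whenever the corresponding local Brill-Noether number $\widehat{\rho}^{(i)}$ is at least $1$, the factor is connected, and hence any part in which every $\widehat{\rho}^{(i)} \geq 1$ is connected as a product. For parts containing a factor with $\widehat{\rho}^{(i)} = 0$, the second assertion of Theorem \ref{thm:connected} provides that every connected component of that factor meets the closed subscheme obtained by increasing one entry of its vanishing sequence, which is precisely the information needed to bridge adjacent parts under the combinatorial moves.

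Finally, I would run the two types of ``move'' from the proof of Theorem \ref{thm:connected} at each of the $n - 1$ nodes $Q_i = P_{i+1}$: each move swaps slack between $b^i_\bullet$ and $a^{i+1}_\bullet$ at a single node, affecting only $\widehat{\rho}^{(i)}$ and $\widehat{\rho}^{(i+1)}$ and leaving all other local Brill-Noether numbers unchanged. Iterating these moves transforms any tuple of complementary sequences into one with every $\widehat{\rho}^{(j)} \geq 1$, whose corresponding part is connected; the meeting statement from the previous paragraph then connects every other part to this one through the graph of moves. The main obstacle is to check that the node-by-node analysis of Theorem \ref{thm:connected} still succeeds in the presence of several simultaneous nodes, but because each move is local and does not disturb the configuration elsewhere, one can schedule the moves one node at a time and the $n = 2$ bookkeeping carries over verbatim at each node.
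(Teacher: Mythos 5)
Your approach runs in the opposite direction from the paper's: the paper proves this corollary in one line by \emph{smoothing} $X_0$ --- the reducedness and dimension hypotheses feed into Theorem \ref{thm:family-recall} to produce a flat, proper family over a DVR with special fiber $G^r_d(X_0,(P_1,a_{\bullet}),(Q_n,b_{\bullet}))$ and generic fiber the linear series space of a smooth curve, which is connected by Theorem \ref{thm:connected} (after moving within a versal family to a general curve); constancy of the number of geometric connected components in such a family (EGA IV, 15.5.4/15.5.7) then transfers connectedness back to the special fiber. That argument needs nothing about the individual components $Z_i$, which is exactly where your direct combinatorial attack breaks down.

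There are two concrete gaps. First, your target configuration does not exist in general: you propose to move to a tuple with every $\widehat{\rho}^{(j)} \geq 1$ so that each factor, and hence the product, is connected. But Lemma \ref{lem:subadditive} gives $\sum_j \widehat{\rho}^{(j)} \leq \widehat{\rho}$, so when $\widehat{\rho}=1$ (or more generally $\widehat{\rho} < n$) at most one local invariant can be positive and most factors are forced to have $\widehat{\rho}^{(j)}=0$. The paper's proof of Theorem \ref{thm:connected} avoids this only because its degeneration has $Z_1$ of genus $1$, where Proposition \ref{prop:base-cases} gives irreducibility regardless of $\widehat{\rho}_1$; in Situation \ref{sit:chain} the $Z_i$ are arbitrary, and a factor with $\widehat{\rho}^{(i)}=0$ may genuinely be disconnected (Example \ref{ex:disconnected}), so no part of your decomposition need be connected. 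Second, the bridging step leans on the second assertion of Theorem \ref{thm:connected}, which is stated and proved only for \emph{general} $2$-marked curves (it is proved by degeneration and a versal-family argument); the components $Z_i$ here are subject only to the global hypothesis that $G^r_d(X_0,(P_1,a_{\bullet}),(Q_n,b_{\bullet}))$ is reduced of dimension $\rho$, so you are not entitled to the meeting statement on each factor. Even granting purity and reducedness of each factor (which does follow from additivity of $\rho$ and lower bounds on dimensions of components, as you note), these two obstructions are fatal to the node-by-node strategy as described.
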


Note that by Corollary \ref{cor:lls-reduced-2}, the dimension and 
reducedness hypotheses are satisfied in particular when the components of 
$X_0$ are general as $2$-marked curves. 

\begin{proof} Knowing connectedness for a general smooth curve, the desired
statement follows by specialization just as in the proof of the last part 
of Theorem \ref{thm:main}.
\end{proof} 

\begin{rem}\label{rem:conn-combinatorial} Even when $\widehat{\rho}=0$, our 
techniques reduce the problem of finding the number of connected components 
of $G^r_d(X,(P,a_{\bullet}),(Q,b_{\bullet}))$ for a general $2$-marked curve
to a purely combinatorial question. Indeed, if we use
Theorem \ref{thm:family-recall}, Corollary \ref{cor:lls-reduced-1}, and 
Proposition 15.5.7 of \cite{ega43}, we see that the answer is determined
by the number of connected components of 
$G^r_d(X_0,(P_1,a_{\bullet}),(Q_n,b_{\bullet}))$, where $X_0$ is a chain
of curves of genus $1$ with suitably general marked points. As is well
known (and follows for instance from Proposition \ref{prop:base-cases}),
the irreducible components of 
$G^r_d(X_0,(P_1,a_{\bullet}),(Q_n,b_{\bullet}))$ correspond to choices of
tuples of complementary vanishing sequences at the nodes which have
$\widehat{\rho}_i \geq 0$ for all $i$, and we can likewise determine which
pairs of components have nonempty intersection by looking at the vanishing
sequences involved, so the question becomes entirely combinatorial.
\end{rem}

\bibliographystyle{amsalpha}
\bibliography{gen}

\end{document}